\theoremstyle{plain}
\newtheorem{theorem}{Theorem}[section]
\newtheorem{lemma}[theorem]{Lemma}
\newtheorem{corollary}{Corollary}
\theoremstyle{definition}
\newtheorem{defn}{Definition}[section]
\title{}
\begin{document}
\title[On the connectivity of enhanced power graph of 
finite group]{On the connectivity of enhanced power graph of 
finite group}
\author[Sudip Bera]{Sudip Bera}
\author[Hiranya Kishore Dey]{Hiranya Kishore Dey}
\author[Sajal Kumar Mukherjee]{Sajal Kumar Mukherjee}
\address[Sudip Bera]{Department of Mathematics, Indian Institute of Science, Bangalore 560 012}
\email{sudipbera@iisc.ac.in}
\address[Hiranya Kishore Dey]{Department of Mathematics, Indian Institute of Technology, Bombay, India.}
\email{hkdey@math.iitb.ac.in}
\address[Sajal Kumar Mukherjee]{Department of Mathematics, Indian Institute of Science, Bangalore 560 012}
\email{sajalm@iisc.ac.in}
\keywords{abelian group; dominating vertex; enhanced power graph; vertex connectivity}
\subjclass[2010]{05C25}
\maketitle
\begin{abstract}
This paper deals with the vertex connectivity of enhanced power graph of finite group. We classify all abelian groups $G$ such that vertex connectivity of enhanced power graph of $G$ is $1.$ We derive an upper bound of vertex connectivity for the enhanced power graph of any general abelian group $G.$ Also we completely characterize all abelian group $G,$ such that the proper enhanced power graph is connected. Moreover, we study some special class of non-abelian group $G$ such that the proper enhanced power graph is connected and we find their vertex connectivity. 
\end{abstract}
\section{Introduction}
\label{sec:intro}
The exploration of graphs associated with algebraic structures is important, as graphs like these enrich
both algebra and graph theory. Besides, they have important applications (see, for example, 
\cite{surveypwrgraphkac1, cayleygraphsckry}) and are related to automata theory \cite{automatatheory}. During the last two decades, investigation of the interplay
between the properties of an algebraic structure $S$ and the graph-theoretic properties of $\Gamma(S),$ a graph
associated with $S,$ has been an exciting topic of research. Different types of graphs, specifically zero-divisor graph of a ring \cite{anderson}, semiring \cite{atani1}, semigroup \cite{deMeyer}, poset \cite{joshizerodivgraphofideal}, power graph of semigroup \cite{undpwrgraphofsemgmainsgc1, directedgrphcompropofsemgrpkq3}, group \cite{combinatorialpropertyandpowergraphsofgroupskq1}, normal subgroup based power graph of group \cite{normalsubgrpbasedpwrbb2}, intersection power graph of group \cite{intersectionpwegraphb3} etc. have been introduced to study algebraic structures using graph theory. Chakrabarty et. al in \cite{undpwrgraphofsemgmainsgc1} introduced the undirected power graph of a semigroup in the following way.
\begin{defn}[\cite{undpwrgraphofsemgmainsgc1}]\label{defn: powr graph}
Let $S$ be a semigroup, then the \emph{power graph} $\mathcal{P}(S)$ of $S,$ is a simple graph, whose vertex set is $S$ and two distinct vertices $u$ and $v$ are edge connected if and only if either $u^m=v$ or $v^n=u,$ where $m, n\in \mathbb{N}.$    
\end{defn}	
Another well-studied graph, called \emph{commuting graph} associated with a group $G$ is studied in \cite{braurflower} as a part of the classification of finite simple groups.  For more information about the commuting graph, see \cite{ijraeljofmathematics, onboundingdiamcommuting}.
\begin{defn}[\cite{braurflower}]\label{defn: commuting graph}
Let $G$ be a group, then the \emph{commuting graph} of $G,$ denoted by $\mathcal{C}(G),$ is the simple graph whose vertex set is non-central elements of $G$ and two distinct vertices $u$ and $v$ are adjacent if and only if $uv=vu.$ 	
\end{defn}
\begin{defn}[\cite{firstenhcedpwrstrctreaacns1}]\label{defn: enhcdpowr graph}
Given a group $G,$ the \emph{enhanced power graph} of $G,$
denoted by $\mathcal{G}_e(G),$ is the graph with vertex set $G,$ in which $u$ and $v$ are joined if and only
if there exists an element $w \in G$ such that both $u$ and $v$ are powers of $w.$
\end{defn}
The authors in \cite{firstenhcedpwrstrctreaacns1} measure how close the power graph is to the commuting graph by using the enhanced power graph. In fact, the enhanced power graph contains the power graph and is a subgraph of the commuting graph. They characterized the finite groups such that, for an arbitrary pair of these three graphs for which this pair of graphs are equal.
Besides, in \cite{Zahirovienhnacedpwrgraph}, the researchers proved that finite groups with isomorphic enhanced power graphs have isomorphic directed power graphs. They showed that any isomorphism between the undirected power graph of finite groups is an isomorphism between enhanced power graphs of these group.
Ma and She  in \cite{ma-she} derived the metric dimension of enhanced power graphs of finite groups  where as Hamzeh et.al in \cite{Hamzeh-ashrafi} derived the automorphism groups 
of enhanced power graphs of finite groups.
Recently Panda et.al in \cite{panda-dalal-kumar} have studied independence number, vertex covering number and some other graph invariants of enhanced power graphs.
\subsection{Basic definitions, Notations and Main Results}\label{subsection:main results}
For the convenience of the reader and also for later use, we recall some basic
 definitions and notations about graphs. 
Let $\Gamma=(V, E)$ be a graph where $V$ is the set of vertices and $E$ is the set of edges. Two elements $u$ and $v$ are said to be adjacent if $(u, v) \in E.$ The standard distance between two vertices $u$ and $v$ in a connected graph $\Gamma$ is denoted by $d(u, v).$ Clearly, if $u$ and $v$ are adjacent, then $d(u, v)=1.$ For a graph $\Gamma,$ its \emph{diameter} is defined as $diam(\Gamma)= \max_{u, v \in V} d(u, v).$ That is, the diameter of graph is the largest possible distance between pair of vertices of a graph. A \emph{path} of length $l$ between two vertices $v_0$ and $v_k$ is an alternating
sequence of vertices and edges $v_0, e_0, v_1, e_1, v_2, \cdots , v_{k-1}, e_{k-1}, v_k$, where the $v_i'$s are distinct
(except possibly the first and last vertices) and $e_i$ is the edge $(v_i, v_{i+1}).$ A graph $\Gamma$ is said to be \emph{connected} if for any pair of vertices $u$ and $v,$ there exists a path between $u$ and $v.$ $\Gamma$ is said to be \emph{complete} if any two distinct vertices are adjacent. A vertex of a graph $\Gamma=(V, E)$ is called a 
\emph{dominating vertex} if it is adjacent to every other
vertex. For a graph $\Gamma,$ let $Dom(\Gamma)$ denote the set of all dominating vertices in $\Gamma.$ The \emph{vertex connectivity} of a graph $\Gamma,$ denoted by $\kappa{(\Gamma)}$ is
the minimum number of vertices which need to be removed from the vertex set $\Gamma$ so that the
induced subgraph of $\Gamma$ on the remaining vertices is disconnected. The complete graph with $n$ vertices has connectivity $n-1.$ For more on graph theory we refer \cite{graphthrybondymurti, algbraphgodsil, graphthrywest}. The enhanced power graph is called \emph{dominatable} if it has a dominating vertex other than identity.

Throughout this paper we consider $G$ as a finite group.  $|G|$ denotes the cardinality of the set $G.$ For a prime $p,$ a group $G$ is said to be a $p$-group if $|G|=p^{r}, r\in \mathbb{N}.$ For any element $g \in G, \text{o}(g)$ denotes the order of the element $g \in G.$ Let $G$ be a group and $a\in G,$ then $\mathcal{G}en(a)$ is the set of all generators of the cyclic group $\langle a\rangle.$ Let $m$ and $n$ be any two positive integers, then the greatest common divisor of $m$ and $n$ is denoted by $\text{gcd}(m, n).$ The Euler's phi function $\phi(n)$ is the number of integers $k$ in the range $1 \leq k \leq n$ for which the  $\text{gcd}(n, k)$ is equal to $1.$ The set $\{1, 2, \cdots, n\}$ is denoted by $[n].$
  
In this paper, our focus is on the vertex connectivity of enhanced power graphs of finite abelian groups. If $G$ is a non-cyclic non-generalized quaternion $p$-group, then we determine the exact value of the vertex connectivity of $\mathcal{G}_e(G).$
\begin{theorem}\label{vc=1 for abln and non abln group}
Let $G$ be a finite $p$-group such that $G$ is neither cyclic nor generalized quaternion group. Then $\kappa(\mathcal{G}_e(G))=1.$	
\end{theorem}
Our next result classifies all non-cyclic abelian groups $G$ such that 
$\kappa(\mathcal{G}_e(G))=1.$ 
\begin{theorem}
	\label{ enhcd vc 1 iff g is p-group}
	Let $G$ be a finite non-cyclic abelian group. Then $\kappa(\mathcal{G}_e(G))$ is $1$ if and only if $G$ is a $p$-group.  
\end{theorem}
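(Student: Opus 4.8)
The plan is to recast the whole equivalence as a question about whether the identity is a cut vertex. Since the identity $e$ lies in $\langle g\rangle$ for every $g\in G$, it is a dominating vertex of $\mathcal{G}_e(G)$; hence $\mathcal{G}_e(G)$ is connected, and for any $v\neq e$ the graph $\mathcal{G}_e(G)-v$ still has $e$ as a dominating vertex and is therefore connected. Consequently the only vertex that can possibly be a cut vertex is $e$, so $\kappa(\mathcal{G}_e(G))=1$ if and only if $\mathcal{G}_e(G)-e$ is disconnected. The implication ``$G$ a $p$-group $\Rightarrow\kappa=1$'' is then immediate from Theorem~\ref{vc=1 for abln and non abln group}: a non-cyclic abelian $p$-group is in particular neither cyclic nor generalized quaternion (the latter being non-abelian), so that theorem already yields $\kappa(\mathcal{G}_e(G))=1$.

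For the reverse implication I would argue the contrapositive. Assume $G$ is non-cyclic abelian but \emph{not} a $p$-group; I will show $\mathcal{G}_e(G)-e$ is connected, which forces $\kappa(\mathcal{G}_e(G))\geq 2$. Throughout I use the standard fact that in an abelian group $u$ and $v$ are adjacent in $\mathcal{G}_e(G)$ exactly when $\langle u,v\rangle$ is cyclic. First I reduce to elements of prime-power order: for a non-identity $x$, its primary (Sylow) decomposition $x=\prod_i y_i$ has each $y_i\in\langle x\rangle$ a power of $x$ of prime-power order, with at least one $y_i\neq e$; since $\langle x,y_i\rangle=\langle x\rangle$ is cyclic, $x$ is adjacent to that $y_i$. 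Thus it suffices to connect every non-identity element of prime-power order to one fixed reference vertex.

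Now fix distinct primes $p\neq q$ dividing $|G|$ (possible because $G$ is not a $p$-group), an element $a_0$ with $o(a_0)=p$, and an element $b_0$ with $o(b_0)=q$, taking $b_0$ as the hub. The key device is that if $\gcd(o(a),o(b))=1$ in the abelian group $G$, then $\langle a,b\rangle=\langle ab\rangle$ is cyclic and contains both $a$ and $b$, so $a\sim ab\sim b$. Applying this to a prime-power element $x$ of order $r^{k}$: if $r\neq q$ then $\gcd(o(x),q)=1$, whence $x\sim x b_0\sim b_0$; if $r=q$ then $\gcd(o(x),p)=1$, whence $x\sim x a_0\sim a_0$, and since also $a_0\sim a_0 b_0\sim b_0$, we again obtain a path from $x$ to $b_0$. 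In either case $x$ reaches $b_0$ along a path avoiding $e$. Combined with the reduction of the previous paragraph, every non-identity element of $G$ is connected to $b_0$ in $\mathcal{G}_e(G)-e$, so that graph is connected and $\kappa(\mathcal{G}_e(G))\neq 1$.

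The one genuine subtlety—where I expect the real work to lie—is that the subgraph induced on a single Sylow subgroup may itself be disconnected (for instance on $\mathbb{Z}_p\times\mathbb{Z}_p$ it splits into cliques coming from the order-$p$ cyclic subgroups). Thus the connectivity does not come from any one Sylow piece but genuinely relies on the mixed-order ``bridge'' elements $ab$, which lie in cyclic subgroups of order $o(a)o(b)$ and so glue together the different prime-power strata; the heart of the argument is verifying these bridging adjacencies and checking that $e$ is never needed as an intermediate vertex. Everything else is the routine reduction described above.
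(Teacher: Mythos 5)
Your proposal is correct and takes essentially the same approach as the paper: the forward direction is the same appeal to Theorem~\ref{vc=1 for abln and non abln group}, and your converse rests on the same key fact (the paper's Lemma~\ref{lemma:any_odtwo_commuting_elements_of_prime_order_adjacent}, that commuting elements of coprime orders are adjacent) together with reduction to prime-power components, merely organized as a hub-and-spoke argument rather than the paper's two-case analysis of arbitrary pairs $a,b$. Your explicit observation that the dominating vertex $e$ is the only possible cut vertex is left implicit in the paper, but that is a presentational difference, not a different proof.
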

%If $G$ is a non-cyclic non-generalized quaternion $p$-group, then we determine the exact value of the vertex connectivity of $\mathcal{G}_e(G).$ In fact, the vertex connectivity is $1.$ Specifically, we classify all non-cyclic abelian groups $G$ such that $\kappa(\mathcal{G}_e(G))=1.$ As a matter of fact
%our results state the following: 
The authors in \cite[Question 40]{firstenhcedpwrstrctreaacns1} asked about the connectivity of power graphs when all the dominating vertices are removed. Recently, Cameron and Jafari in \cite{HeidarJafari} answered this question for power graphs. In this paper, we investigate the same question for enhanced power graphs. To seek the answer of this question, define the following graph:
\begin{defn}\label{defn: proper enhacd pwr graph}
Given a group $G,$ the \emph{proper enhanced power graph} of $G,$ denoted by $\mathcal{G}^{**}_e(G),$ is the graph obtained by deleting all the dominating vertices from the enhanced power graph $\mathcal{G}_e(G).$ Moreover, by $\mathcal{G}^{*}_e(G)$ we denote the graph obtained by deleting only the identity element of $G$ and this is called \emph{deleted enhanced power graph} of $G.$ Note that if there is no such dominating vertex other than identity, then $\mathcal{G}_{e}^{*}(G)=\mathcal{G}_{e}^{**}(G).$
\end{defn}
In \cite{enhancedpwrgrapbb3}, Bera et.al characterized all abelian groups $G,$ such that $|\text{Dom}(\mathcal{G}_e(G))|>1.$ In fact, they proved the folllowing:  
\begin{theorem}[\cite{enhancedpwrgrapbb3}]\label{thm:bb3 ehced, dom iff cylic syllow}
Let $G$ be a finite abelian group. Then $\mathcal{G}_e(G)$
is dominatable if and only if $G$ has a cyclic Sylow subgroup. 
\end{theorem}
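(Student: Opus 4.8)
The plan is to translate domination into a purely group-theoretic condition and then reduce everything to the Sylow subgroups via the primary decomposition. First I would record the basic reformulation of adjacency: two vertices $u,v$ are joined in $\mathcal{G}_e(G)$ precisely when $\langle u,v\rangle$ is cyclic. Indeed, if $u,v\in\langle w\rangle$ then $\langle u,v\rangle\le\langle w\rangle$ is cyclic, and conversely any generator of a cyclic $\langle u,v\rangle$ serves as the required $w$. Consequently a vertex $x$ is dominating if and only if $\langle x,y\rangle$ is cyclic for \emph{every} $y\in G$; the identity always has this property, so $G$ is dominatable exactly when some $x\neq e$ has it.

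The key step is a decomposition lemma. Write $G=P_1\times\cdots\times P_k$ for the primary (Sylow) decomposition, and for $x\in G$ write $x=(x_1,\dots,x_k)$ with $x_i\in P_i$. Since the orders of $x_i$ and $y_i$ are powers of the distinct primes $p_i$, the Chinese Remainder Theorem lets me match the exponents independently on each factor, giving $\langle x,y\rangle=\langle x_1,y_1\rangle\times\cdots\times\langle x_k,y_k\rangle$. Because a direct product of groups of pairwise coprime order is cyclic if and only if each factor is cyclic, $\langle x,y\rangle$ is cyclic iff every $\langle x_i,y_i\rangle$ is cyclic. Hence $x$ dominates $G$ iff each $x_i$ dominates $P_i$, and, since $e$ always dominates each factor, $G$ is dominatable iff at least one $P_i$ admits a non-identity dominating vertex.

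It then remains to treat a single abelian $p$-group $P$ and prove it is dominatable iff it is cyclic. If $P=\langle g\rangle$ is cyclic, then $\langle g,y\rangle=P$ is cyclic for all $y$, so $g$ is a non-identity dominating vertex. For the converse I would argue contrapositively: if $P$ is non-cyclic, its socle $P[p]=\{z:z^p=e\}$ has rank at least two, hence contains a copy of $(\mathbb{Z}/p)^2$. Given any $x\neq e$ of order $p^s$, the element $z=x^{p^{s-1}}$ has order $p$, and since the socle has rank $\ge 2$ I may choose $y\in P[p]$ with $y\notin\langle z\rangle$; then $\langle z\rangle\cap\langle y\rangle=\{e\}$, so $\langle x,y\rangle\supseteq\langle z,y\rangle\cong(\mathbb{Z}/p)^2$ is non-cyclic, showing $x$ is not dominating. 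Combining this with the decomposition, $G$ is dominatable iff some $P_i$ is cyclic, that is, iff $G$ has a cyclic Sylow subgroup.

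I expect the main technical point to be the decomposition lemma $\langle x,y\rangle=\prod_i\langle x_i,y_i\rangle$: one must verify that a two-generated subgroup genuinely splits along the coprime factors, which is exactly where the exponent-matching via the Chinese Remainder Theorem does the work. Once this is in place, both implications reduce to short computations with the socle.
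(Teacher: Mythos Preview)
The paper does not prove this theorem at all: it is quoted verbatim from \cite{enhancedpwrgrapbb3} as a known result (note the citation in the theorem header) and is used only as an input to the paper's own results. So there is no in-paper proof to compare against.

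That said, your argument is correct. The reformulation of adjacency as ``$\langle u,v\rangle$ is cyclic'' is standard, the primary decomposition $\langle x,y\rangle=\prod_i\langle x_i,y_i\rangle$ holds because in an abelian group with Sylow decomposition every subgroup splits along the Sylow factors, and your socle argument for a non-cyclic abelian $p$-group is clean: for any $x\neq e$ you exhibit an order-$p$ element $y$ outside $\langle x^{p^{s-1}}\rangle$, forcing $\langle x,y\rangle\supseteq(\mathbb{Z}/p)^2$. The logical bookkeeping that ``$G$ dominatable $\Leftrightarrow$ some $P_i$ admits a non-identity dominating vertex'' is also fine, since the identity dominates every factor and one may embed a dominating vertex of a single $P_j$ into $G$ by padding with identities.
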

So, from Theorem \ref{thm:bb3 ehced, dom iff cylic syllow}, 
$|\text{Dom}(\mathcal{G}_e(G))|>1$ if and only if $G=G_1\times \mathbb{Z}_n,$ where $\text{gcd}(|G_1|, n)=1$ and $G_1$ has no cyclic sylow subgroup. Then one natural question is that which are the dominatable vertices of $\mathcal{G}_e(G).$ The next theorem gives the complete list of the dominating vertices of $\mathcal{G}_e(G).$   
\begin{theorem}\label{thm: all dom of G times Zn}
Let $G_1$ be a non-cyclic abelian group such that $G_1$ has no cyclic sylow subgroup. If $n\in \mathbb{N}, \text{ and gcd}(|G_1|, n)=1$, then $\text{Dom}(\mathcal{G}_e({G_1\times \mathbb{Z}_n}))=\{(e, x), \text{ where } x \text{ is any element of } \mathbb{Z}_n  \text{ and } e \text{ is the identity of } G_1\}.$ 
\end{theorem}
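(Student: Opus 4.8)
The plan is to reduce the adjacency relation in $\mathcal{G}_e(G_1 \times \mathbb{Z}_n)$ to one that sees only the $G_1$-coordinate, and then read off the dominating vertices from the structure of $\mathcal{G}_e(G_1)$ using Theorem \ref{thm:bb3 ehced, dom iff cylic syllow}. First I would record the following description of edges. Since $\gcd(|G_1|, n) = 1$, for any $w = (a, b) \in G_1 \times \mathbb{Z}_n$ the orders $\text{o}(a)$ and $\text{o}(b)$ are coprime, so $(a,b)$ has order $\text{o}(a)\,\text{o}(b) = |\langle a\rangle \times \langle b\rangle|$ and a counting argument gives $\langle w \rangle = \langle a \rangle \times \langle b \rangle$. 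Using this, I claim that two distinct vertices $(g_1, y_1)$ and $(g_2, y_2)$ are adjacent in $\mathcal{G}_e(G_1 \times \mathbb{Z}_n)$ if and only if $g_1$ and $g_2$ lie in a common cyclic subgroup of $G_1$. For the forward direction, if both elements lie in $\langle w \rangle = \langle a \rangle \times \langle b \rangle$, projecting onto the first coordinate forces $g_1, g_2 \in \langle a \rangle$. For the reverse direction, if $g_1, g_2 \in \langle h \rangle$ for some $h \in G_1$ and $c$ is a generator of $\mathbb{Z}_n$, then both elements lie in $\langle (h, c) \rangle = \langle h \rangle \times \mathbb{Z}_n$. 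In particular the second coordinate never obstructs adjacency, because $\mathbb{Z}_n$ is cyclic and so any two of its elements already share a cyclic subgroup.

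With this edge description in hand, the identity coordinate makes the first family dominating: for every $x$, the vertex $(e, x)$ satisfies $e \in \langle g \rangle$ for all $g \in G_1$, so $(e,x)$ shares the cyclic subgroup $\langle g\rangle$ of $G_1$ with the first coordinate of every other vertex, and is therefore adjacent to all of them. This gives the inclusion $\supseteq$ in the statement.

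For the reverse inclusion, suppose $(g, y)$ is dominating with $g \neq e$. By the edge description, $g$ must lie in a common cyclic subgroup of $G_1$ with every element of $G_1$; equivalently, $g$ is adjacent in $\mathcal{G}_e(G_1)$ to every vertex other than itself, i.e. $g \in \mathrm{Dom}(\mathcal{G}_e(G_1))$. But $G_1$ has no cyclic Sylow subgroup, so by Theorem \ref{thm:bb3 ehced, dom iff cylic syllow} the graph $\mathcal{G}_e(G_1)$ is not dominatable, which means its only dominating vertex is the identity $e$. This contradicts $g \neq e$, and hence every dominating vertex of $\mathcal{G}_e(G_1\times \mathbb{Z}_n)$ has first coordinate $e$.

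The only real content is the edge description, and within it the identity $\langle (a,b)\rangle = \langle a\rangle \times \langle b\rangle$ together with the projection argument; everything afterwards is a direct application of Theorem \ref{thm:bb3 ehced, dom iff cylic syllow}, so I expect this lemma to be the main (though modest) obstacle. The one point to handle carefully is the translation between ``$g$ is a dominating vertex of $\mathcal{G}_e(G_1)$'' and ``$\mathcal{G}_e(G_1)$ is not dominatable'': since the identity of $G_1$ is automatically a dominating vertex of $\mathcal{G}_e(G_1)$, non-dominatability is precisely the assertion $\mathrm{Dom}(\mathcal{G}_e(G_1)) = \{e\}$, which is exactly what the contradiction in the last paragraph requires.
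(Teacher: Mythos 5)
Your proof is correct, but it takes a genuinely different route from the paper's. Your key move is the lemma $\langle (a,b)\rangle = \langle a\rangle \times \langle b\rangle$ for coprime orders, which yields a clean edge description: two vertices of $\mathcal{G}_e(G_1\times \mathbb{Z}_n)$ are adjacent if and only if their first coordinates share a cyclic subgroup of $G_1$. From this, the inclusion $\supseteq$ is immediate, and the inclusion $\subseteq$ follows by observing that a dominating vertex $(g,y)$ forces $g$ to dominate $\mathcal{G}_e(G_1)$, which contradicts Theorem \ref{thm:bb3 ehced, dom iff cylic syllow} (non-dominatability of $\mathcal{G}_e(G_1)$) unless $g=e$; you are also right that non-dominatability is exactly the statement $\mathrm{Dom}(\mathcal{G}_e(G_1))=\{e\}$. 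The paper instead proves the forward inclusion by hand, via three cases of Euler's-theorem computations showing the relevant elements lie in $\langle (g,a)\rangle$ (these computations are, in effect, special cases of your product lemma), and proves the converse without invoking Theorem \ref{thm:bb3 ehced, dom iff cylic syllow} at all: it writes $G_1$ in its invariant factor decomposition, takes a putative dominating vertex $v$, compares it with a maximum-order element $v'$ to force most coordinates of $v$ to vanish, and then kills the remaining coordinates with an order-$p$ element argument. Your version is shorter and more modular — it isolates the fact that the $\mathbb{Z}_n$-coordinate is adjacency-irrelevant, a reduction that would also streamline other arguments in this setting (e.g.\ the connectivity analysis of $\mathcal{G}_e^{**}(G_1\times\mathbb{Z}_n)$) — while the paper's converse is self-contained, re-deriving the needed structure rather than leaning on the quoted characterization, and in the process exhibits explicitly how a dominating vertex sits inside the coordinate decomposition.
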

Theorem \ref{ enhcd vc 1 iff g is p-group} completely characterizes the connectivity of $\mathcal{G}_{e}^{*}(G)$ for any finite abelian $p$-group $G.$ Now if $G$ is a non-cyclic abelian non $p$-group such that $G$ has no  cyclic sylow subgroup, then by Theorem \ref{thm:bb3 ehced, dom iff cylic syllow}, $G$ has no dominating vertex other than the identity. So, in this case we care about the connectivity of $\mathcal{G}_e^{**}(G)=\mathcal{G}_e^*(G)$ and by  Theorem \ref{ enhcd vc 1 iff g is p-group}, $\mathcal{G}_e^*(G)$ is connected.	
%\blue{ata (thm 1.6) dekhis ai them ar proof gta folow karchhe from thm 1.3, since thm 1.3 ta iff and proof of  converse part of thm 1.3 ta theke clear, so ata ki karbo,}
%\begin{theorem}\label{G has no sylow subgrp,then one star graph connected }
%Let $G$ be a non-cyclic abelian non $p$-group such that $G$ has no  cyclic sylow subgroup. Then $\mathcal{G}_e^*(G)$ is connected.	
%\end{theorem}
Therefore when the graph $\mathcal{G}_e^*(G)$ has a dominating vertex other than ientity, the connectivity of $\mathcal{G}_{e}(G)$ is a more interesting question. 
In this paper,  we characterize for which finite abelian groups, the proper enhanced power graphs $\mathcal{G}_e^{**}(G)$ are connected and for which
they are not. Our contributions on this paper in this theme is the following:
\begin{theorem}
\label{thm:**disconnected}
Let $G$ be a non-cyclic abelian non $p$-group such that $G \cong G_1 \times \mathbb{Z}_n, \text{gcd}(|G_1|, n) =1$ and $G_1$ has no cylcic sylow subgroup. Then $\mathcal{G}^{**}_e(G)$ is disconnected if and only if $G_1$ is a $p$-group. 
\end{theorem}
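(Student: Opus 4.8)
The plan is to reduce the connectivity of $\mathcal{G}^{**}_e(G)$ to that of the deleted enhanced power graph $\mathcal{G}^*_e(G_1)$ of the factor $G_1$, and then to analyze the latter. By Theorem~\ref{thm: all dom of G times Zn} the dominating vertices of $\mathcal{G}_e(G)$ are exactly the elements $(e,x)$ with $x\in\mathbb{Z}_n$, so the vertex set of $\mathcal{G}^{**}_e(G)$ is $(G_1\setminus\{e\})\times\mathbb{Z}_n$. The key structural observation I would establish is a reduction of adjacency to the first coordinate: for $(g_1,x_1),(g_2,x_2)$ with $g_1,g_2\neq e$, the subgroup $\langle (g_1,x_1),(g_2,x_2)\rangle$ is cyclic if and only if $\langle g_1,g_2\rangle$ is cyclic. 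Here I use the elementary fact that in any group two elements are adjacent in the enhanced power graph precisely when the subgroup they generate is cyclic. Using $\gcd(|G_1|,n)=1$, raising $(g_i,x_i)$ to the power $n$ (resp.\ $|G_1|$) shows that both $\langle g_1,g_2\rangle\times\{0\}$ and $\{e\}\times\langle x_1,x_2\rangle$ lie in the generated subgroup, so it equals $\langle g_1,g_2\rangle\times\langle x_1,x_2\rangle$. Since $\langle x_1,x_2\rangle\le\mathbb{Z}_n$ is automatically cyclic and of order coprime to $|\langle g_1,g_2\rangle|$, this product is cyclic exactly when $\langle g_1,g_2\rangle$ is, which proves the claim.

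With this reduction in hand, I would show that $\mathcal{G}^{**}_e(G)$ is connected if and only if $\mathcal{G}^*_e(G_1)$ is connected. Given a path $h_0,\dots,h_k$ in $\mathcal{G}^*_e(G_1)$, one lifts it to a path $(h_0,x),\dots,(h_k,x),(h_k,x')$ in $\mathcal{G}^{**}_e(G)$: consecutive lifts are adjacent because $\langle h_i,h_{i+1}\rangle$ is cyclic, and $(h_k,x)\sim(h_k,x')$ because $\langle h_k\rangle$ is cyclic. Conversely, any path in $\mathcal{G}^{**}_e(G)$ projects onto its first coordinates to a walk in $\mathcal{G}^*_e(G_1)$ (consecutive first coordinates are equal or adjacent), so vertices lying over different components of $\mathcal{G}^*_e(G_1)$ cannot be joined. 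I would also record that, since $G_1$ has no cyclic Sylow subgroup, Theorem~\ref{thm:bb3 ehced, dom iff cylic syllow} forces the identity to be the only dominating vertex of $\mathcal{G}_e(G_1)$, so $\mathcal{G}^*_e(G_1)=\mathcal{G}^{**}_e(G_1)$.

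Finally, I would settle the stated equivalence by analyzing $\mathcal{G}^*_e(G_1)$ in two cases. If $G_1$ is a (necessarily non-cyclic) $p$-group, I would exhibit disconnectedness via the invariant $S(g)$, the unique subgroup of order $p$ inside the cyclic $p$-group $\langle g\rangle$: if $g\sim g'$ then $\langle g,g'\rangle$ is a cyclic $p$-group, which has a unique order-$p$ subgroup, so $S(g)=S(g')$. A non-cyclic $p$-group has at least two subgroups of order $p$, each containing non-identity vertices, so $\mathcal{G}^*_e(G_1)$ has at least two components and is disconnected. If instead $G_1$ is not a $p$-group, then since $G_1$ is non-cyclic abelian, Theorem~\ref{ enhcd vc 1 iff g is p-group} gives $\kappa(\mathcal{G}_e(G_1))\neq 1$; as $\mathcal{G}_e(G_1)$ is always connected (the identity is dominating), we get $\kappa(\mathcal{G}_e(G_1))\ge 2$, so deleting the single vertex $e$ cannot disconnect it, i.e.\ $\mathcal{G}^*_e(G_1)$ is connected. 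Combining with the reduction yields that $\mathcal{G}^{**}_e(G)$ is disconnected if and only if $G_1$ is a $p$-group.

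I expect the main obstacle to be the first-coordinate reduction lemma: one must argue carefully, using $\gcd(|G_1|,n)=1$, that the subgroup generated by two elements splits as the product of the subgroups generated in each factor and that cyclicity is then governed entirely by the $G_1$-part. Once this is clean, the transfer of connectivity and the two-case analysis are routine, the only remaining care being the verification, through the order-$p$ socle invariant, that a non-cyclic $p$-group genuinely produces at least two components rather than merely a single cut vertex.
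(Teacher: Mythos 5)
Your proof is correct, and although it ultimately rests on the same two invariants as the paper's proof --- the unique subgroup of order $p$ inside a cyclic group as a path invariant, and the adjacency of commuting elements of coprime orders --- it is organized around a reduction that the paper never states. The paper argues directly inside $G=G_1\times\mathbb{Z}_n$: for the forward direction it observes via Theorem \ref{thm: all dom of G times Zn} that every vertex of $\mathcal{G}^{**}_e(G)$ has order divisible by $p$, and then reruns the argument of Lemma \ref{lema: p and p^i orderd path connd abelong< b>} on such vertices to conclude that two order-$p$ vertices joined by a path generate the same subgroup; for the converse it connects $(x_1,y_1)$ to $(x_1,0)$ and then $(x_1,0)$ to $(x_2,0)$ by repeating the path construction from the proof of Theorem \ref{ enhcd vc 1 iff g is p-group}. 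You instead prove the splitting $\langle (g_1,x_1),(g_2,x_2)\rangle=\langle g_1,g_2\rangle\times\langle x_1,x_2\rangle$ from $\gcd(|G_1|,n)=1$, so that adjacency in $\mathcal{G}^{**}_e(G)$ is governed entirely by the $G_1$-coordinates, and you obtain the clean equivalence that $\mathcal{G}^{**}_e(G)$ is connected if and only if $\mathcal{G}^{*}_e(G_1)$ is; the dichotomy then follows from your socle invariant $S(g)$ (which is exactly the content of Lemma \ref{lema: p and p^i orderd path connd abelong< b>}) and from Theorem \ref{ enhcd vc 1 iff g is p-group} used as a black box, since $\kappa(\mathcal{G}_e(G_1))\neq 1$ together with connectedness gives $\kappa\geq 2$, so deleting the identity alone cannot disconnect $\mathcal{G}_e(G_1)$. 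Your route buys three things: it makes rigorous the paper's informal step ``applying the proof of the lemma'' (that lemma is stated only for $p$-groups, whereas the vertices of $\mathcal{G}^{**}_e(G)$ are not $p$-elements, so its proof genuinely needs the modification you supply); it isolates once and for all that the coprime cyclic factor $\mathbb{Z}_n$ is invisible to connectivity, and the same lifting/projection argument would also recover the component count of Theorem \ref{thm:no_of_components}; and it replaces repeated ad hoc path constructions by two reusable lemmas. What the paper's route buys is brevity and self-containedness: it needs no splitting lemma, only explicit paths inside the product.
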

Therefore, from Theorem \ref{thm:**disconnected}, when 
$G_1$ is not a $p$-group,
$\mathcal{G}^{**}_e(G)$ remains connected. Thus, the number of additional vertices required to make it disconnected is an interesting question. On this theme, our next result is the following:   
\begin{theorem}\label{vc of enhced pwr raph for grnrral abelian grp}	
	Let $G$ be a non-cyclic abelian group such that \[G\cong\mathbb{Z}_{p^{t_{11}}_1}\times \mathbb{Z}_{p^{t_{12}}_1}\times\mathbb{Z}_{p^{t_{13}}_1}\times\cdots\times\mathbb{Z}_{p^{t_{1k_1}}_1}\times \mathbb{Z}_{p^{t_{21}}_2}\times\mathbb{Z}_{p^{t_{22}}_2}\times\cdots\times\mathbb{Z}_{p^{t_{2k_2}}_2}\times\cdots\times
	\mathbb{Z}_{p^{t_{r1}}_r}\times\mathbb{Z}_{p^{t_{r2}}_r}\times\cdots\times\mathbb{Z}_{p^{t_{rk_r}}_r},\] where $k_i\geq 1 $ and $1\leq t_{i1}\leq t_{i2}\leq\cdots\leq t_{ik_i} $, for all $i\in [r]. $ Then \[\kappa(\mathcal{G}_e(G))\leq p_1^{t_{11}}p_2^{t_{21}}\cdots p_r^{t_{r1}}-\phi(p_1^{t_{11}}p_2^{t_{21}}\cdots p_r^{t_{r1}}).\]
\end{theorem}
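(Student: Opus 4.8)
The plan is to exhibit an explicit vertex cut of size $m-\phi(m)$, where $m:=p_1^{t_{11}}p_2^{t_{21}}\cdots p_r^{t_{r1}}$. For each $i\in[r]$ let $a_i$ be a generator of the first (smallest) cyclic factor $\mathbb{Z}_{p_i^{t_{i1}}}$ of the Sylow $p_i$-subgroup $P_i=\mathbb{Z}_{p_i^{t_{i1}}}\times\cdots\times\mathbb{Z}_{p_i^{t_{ik_i}}}$, and set $a=(a_1,\dots,a_r)$ in $G=P_1\times\cdots\times P_r$. Since the $p_i$ are distinct, $C:=\langle a\rangle$ is cyclic of order $m$, and the proposed cut set is $S:=C\setminus \mathcal{G}en(a)$, which has exactly $m-\phi(m)$ elements.

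The crucial step, and the one I expect to be the main obstacle, is to show that $C$ is a \emph{maximal} cyclic subgroup of $G$. First I would reduce to the Sylow components: any cyclic subgroup $D\leq G$ splits as $D=\prod_i (D\cap P_i)$ with factors of pairwise coprime order, so $C$ is maximal cyclic in $G$ if and only if each $\langle a_i\rangle$ is maximal cyclic in $P_i$. It therefore suffices to prove that the generator $a_i$ of the smallest factor generates a maximal cyclic subgroup of $P_i$. Writing $a_i=(1,0,\dots,0)$ with the $1$ in the coordinate of minimal exponent $t_{i1}$, suppose $a_i\in\langle w\rangle$ with $\text{o}(w)=p_i^{s}$ and $s>t_{i1}$. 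Then $a_i=jw$ where $j$ is chosen so that $jw$ has order $p_i^{t_{i1}}$, forcing $v_{p_i}(j)=s-t_{i1}\geq 1$; hence the first coordinate gives $jw_1\equiv 1\pmod{p_i^{t_{i1}}}$ with $p_i\mid j$, which is impossible since $1$ is a unit. This contradiction proves maximality, the point being precisely that $a_i$ lies in a factor of minimal exponent.

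With maximality in hand the remainder is routine. For any $g\in\mathcal{G}en(a)$ we have $\text{o}(g)=m=|C|$, so any $w$ with $g\in\langle w\rangle$ forces $C=\langle g\rangle\subseteq\langle w\rangle$, and maximality gives $\langle w\rangle=C$; hence every neighbour of $g$ in $\mathcal{G}_e(G)$ lies in $C\setminus\{g\}$. Consequently, after deleting $S=C\setminus\mathcal{G}en(a)$, the set $\mathcal{G}en(a)$ becomes a connected component (indeed a clique) with no edges to $G\setminus C$. Since $G$ is non-cyclic we have $|G|>m$, so $G\setminus C$ is non-empty and survives the deletion; thus $\mathcal{G}_e(G)-S$ is disconnected. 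Therefore $\kappa(\mathcal{G}_e(G))\leq |S|=m-\phi(m)$, as claimed.
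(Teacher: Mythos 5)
Your proof is correct and takes essentially the same approach as the paper: both choose $a$ whose component in each Sylow $p_i$-subgroup generates the cyclic factor of minimal exponent $t_{i1}$, and delete the $m-\phi(m)$ non-generators of the cyclic subgroup $\langle a\rangle$ so that $\mathcal{G}en(a)$ is cut off from the rest of the graph. The only difference is that you explicitly prove the maximality of $\langle a\rangle$ (via the Sylow splitting of cyclic subgroups and the $p$-adic valuation argument), a key step the paper merely asserts, so your write-up is in fact more complete than the paper's.
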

When $G_1$ is a $p$-group, the following result gives the exact value of the vertex connectivity of $\mathcal{G}_e(G).$ 
\begin{theorem}\label{vc of G1 cross cyclic}
Let $G$ be a non-cyclic abelian non-$p$-group such that $G \cong G_1 \times \mathbb{Z}_n, \text{gcd}(|G_1|, n) =1$ and $G_1$ is a $p$-group with no cyclic sylow subgroup. Then $\kappa(\mathcal{G}_e(G))=n.$	
\end{theorem}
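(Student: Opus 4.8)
The plan is to prove $\kappa(\mathcal{G}_e(G)) = n$ by establishing matching upper and lower bounds. Write $G \cong G_1 \times \mathbb{Z}_n$ where $G_1$ is a $p$-group with no cyclic Sylow subgroup and $\gcd(|G_1|, n) = 1$. By Theorem \ref{thm: all dom of G times Zn}, the dominating set is exactly $\mathrm{Dom}(\mathcal{G}_e(G)) = \{(e, x) : x \in \mathbb{Z}_n\}$, which has cardinality exactly $n$.

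First I would prove the upper bound $\kappa(\mathcal{G}_e(G)) \leq n$. The natural candidate for a separating set is the dominating set itself, which has $n$ vertices. The claim is that $\mathcal{G}_e^{**}(G)$ — the graph obtained by deleting these $n$ dominating vertices — is disconnected. But this is precisely the content of Theorem \ref{thm:**disconnected}: since $G_1$ is a $p$-group (hence has no cyclic Sylow subgroup by hypothesis), $\mathcal{G}_e^{**}(G)$ is disconnected. Thus deleting these $n$ vertices disconnects the graph, giving $\kappa(\mathcal{G}_e(G)) \leq n$ immediately.

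The substantive work is the lower bound $\kappa(\mathcal{G}_e(G)) \geq n$, i.e.\ that no set of fewer than $n$ vertices can disconnect $\mathcal{G}_e(G)$. The key structural fact I would exploit is that every dominating vertex $(e, x)$ is adjacent to all other vertices. Suppose $S$ is a separating set with $|S| < n$. Since $|\mathrm{Dom}(\mathcal{G}_e(G))| = n > |S|$, at least one dominating vertex $(e, x_0)$ survives the removal of $S$; but a surviving dominating vertex is adjacent to every remaining vertex, so the graph on the remaining vertices stays connected — a contradiction. Hence every separating set has size at least $n$, giving $\kappa(\mathcal{G}_e(G)) \geq n$. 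Combining the two bounds yields $\kappa(\mathcal{G}_e(G)) = n$.

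The main obstacle, and the place where care is genuinely needed, is the upper bound's reliance on Theorem \ref{thm:**disconnected}: I must confirm that the hypotheses align, namely that when $G_1$ is itself a $p$-group with no cyclic Sylow subgroup, the ``$G_1$ is a $p$-group'' branch of Theorem \ref{thm:**disconnected} applies to give disconnectedness, so that the dominating set is a genuine vertex cut rather than merely a candidate. The lower bound argument is clean once one invokes that a surviving universal (dominating) vertex forces connectivity; the only subtlety is ensuring that after removing $S$ there remain at least two non-dominating vertices so that connectivity is a meaningful assertion, which holds because $G$ is non-cyclic and large. I would present the lower bound first as it is the conceptual heart, then dispatch the upper bound by citing the two earlier theorems.
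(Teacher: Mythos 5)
Your proposal is correct and is precisely the argument the paper intends: the paper's proof simply cites Theorems \ref{thm: all dom of G times Zn} and \ref{thm:**disconnected}, and your write-up supplies exactly the details of how they combine — the $n$ dominating vertices $\{(e,x) : x \in \mathbb{Z}_n\}$ form a disconnecting set since $G_1$ is a $p$-group, while any set of fewer than $n$ vertices leaves a dominating vertex intact, keeping the graph connected. No gaps; your version is just a more explicit rendering of the paper's two-line proof.
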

Throughout this paper, the group operation of any abelian group is taken to be additive.
%\blue{Now if $G_1$ is non $p$-group in Theorem \ref{thm:**disconnected}, then one natural question is that, what is vertex connectivity of $\mathcal{G}_e(G).$ } 
\section{Preliminaries}
\label{sec:prelim}
We first recall some earlier known  results on enhanced power graphs which we will need throughout the paper.
In \cite{enhancedpwrgrapbb3}, Bera et.al studied about the completeness, dominatability and many other properties of enhanced power graph of finite group. In fact, they proved the following:
\begin{lemma}[Theorem 2.4, \cite{enhancedpwrgrapbb3}]\label{enhd coplte iff cyclic}
The enhanced power graph $\mathcal{G}_e(G)$ of the group $G$ is complete if and only if $G$ is cyclic.
\end{lemma}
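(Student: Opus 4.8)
The plan is to prove the two implications separately, working directly from Definition~\ref{defn: enhcdpowr graph} and the notion of completeness.

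The easy direction is $G$ cyclic $\Rightarrow$ $\mathcal{G}_e(G)$ complete. Suppose $G=\langle g\rangle$. Then every element of $G$ is a power of $g$, so for any two distinct vertices $u,v$ I simply take $w=g$ in the adjacency condition: both $u$ and $v$ are powers of $w$, hence $u$ and $v$ are joined. Since this holds for every pair, $\mathcal{G}_e(G)$ is complete.

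For the converse, the key observation is that completeness translates into a purely group-theoretic statement. If $u$ and $v$ are adjacent, then both lie in $\langle w\rangle$ for some $w$, so $\langle u,v\rangle\subseteq\langle w\rangle$ is cyclic. Thus completeness of $\mathcal{G}_e(G)$ says exactly that every pair of elements of $G$ lies in a common cyclic subgroup. I would then exploit finiteness by fixing an element $g\in G$ of maximal order and showing $G=\langle g\rangle$. Given any $h\in G$ with $h\neq g$, adjacency yields some $w$ with $g,h\in\langle w\rangle$. From $g\in\langle w\rangle$ I get $\mathrm{o}(g)\mid\mathrm{o}(w)$, while maximality of $\mathrm{o}(g)$ forces $\mathrm{o}(w)\leq\mathrm{o}(g)$; together these give $\mathrm{o}(w)=\mathrm{o}(g)$, and since $\langle g\rangle\subseteq\langle w\rangle$ with equal cardinalities we conclude $\langle g\rangle=\langle w\rangle$. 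Hence $h\in\langle g\rangle$, and as $h$ was arbitrary (the case $h=g$ being trivial), $G=\langle g\rangle$ is cyclic.

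The argument is short, so I do not anticipate a serious obstacle. The only points that need care are that an element of maximal order exists precisely because $G$ is finite, and that the inclusion $\langle g\rangle\subseteq\langle w\rangle$ together with equality of orders upgrades to $\langle g\rangle=\langle w\rangle$; both are routine but are exactly where the finiteness hypothesis is used.
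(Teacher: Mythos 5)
Your proof is correct: the forward direction via a generator and the converse via an element of maximal order (using $g\in\langle w\rangle \Rightarrow \mathrm{o}(g)\mid \mathrm{o}(w)$, maximality to force $\mathrm{o}(w)=\mathrm{o}(g)$, and finiteness to upgrade $\langle g\rangle\subseteq\langle w\rangle$ to equality) is exactly the standard argument. Note that the paper under review does not prove this statement at all — it is quoted as Theorem 2.4 of \cite{enhancedpwrgrapbb3} — and your argument is essentially the same as the proof given in that cited source, so there is no substantive difference in approach to report.
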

\begin{lemma}[Theorem 3.3, \cite{enhancedpwrgrapbb3}]\label{dom of gen Q_2^n in enhced}
Let $G$ be a non-abelian $2$-group. Then the enhanced power graph $\mathcal{G}_e(G)$ is dominatable  if and only if $G$ is generalized quarternion group.
\end{lemma}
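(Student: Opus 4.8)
The plan is to reformulate adjacency in $\mathcal{G}_e(G)$ group-theoretically: two distinct vertices $u,v$ are adjacent precisely when $\langle u,v\rangle$ is cyclic (equivalently, both lie in one cyclic subgroup). With this dictionary, a non-identity element $x$ is a dominating vertex exactly when $\langle x,v\rangle$ is cyclic for every $v\in G$, and the lemma becomes the assertion that a non-abelian $2$-group admits such an $x$ if and only if it is generalized quaternion. I would prove the two implications separately, the converse resting on the classical structure theorem for $p$-groups with a unique subgroup of order $p$.

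For the forward direction, suppose $G=Q_{2^n}$ and let $z$ be its unique involution, which generates the unique subgroup of order $2$. Given any $v\neq e$, its order is $2^k$ with $k\geq 1$, so $v^{2^{k-1}}$ is an involution and must equal $z$; thus $z\in\langle v\rangle$, so $\langle z,v\rangle=\langle v\rangle$ is cyclic and $z$ is adjacent to $v$. As $v$ was arbitrary, $z$ is a dominating vertex distinct from the identity, so $\mathcal{G}_e(G)$ is dominatable.

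For the converse, suppose $x\neq e$ is a dominating vertex and write $\mathrm{o}(x)=2^k$ with $k\geq 1$. The element $z:=x^{2^{k-1}}$ is an involution, and for every $v$ one has $\langle z,v\rangle\subseteq\langle x,v\rangle$, the latter cyclic because $x$ dominates; since subgroups of cyclic groups are cyclic, $z$ is again a dominating vertex. I would then argue that $z$ is the only involution of $G$: any other involution $t$ would make $\langle z,t\rangle$ cyclic while containing two distinct elements of order $2$, which is impossible. (One may also note that every dominating vertex is central, since $\mathcal{G}_e(G)$ is a subgraph of the commuting graph, though this is not needed below.) Hence $G$ is a $2$-group with a unique subgroup of order $2$.

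The decisive step is to invoke the classification theorem that a finite $p$-group with a unique subgroup of order $p$ is either cyclic or, when $p=2$, generalized quaternion. Applying it to our group $G$ leaves exactly these two options, and the standing hypothesis that $G$ is non-abelian rules out the cyclic case; therefore $G$ is generalized quaternion. I expect the only real obstacle to be citing this structure theorem correctly, since the graph-theoretic reductions---passing to a dominating involution and establishing its uniqueness---are elementary once adjacency has been restated in terms of cyclic subgroups.
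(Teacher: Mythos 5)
Your proof is correct, but note that there is nothing in this paper to compare it against: the statement is imported verbatim as Theorem 3.3 of the cited work \cite{enhancedpwrgrapbb3}, and the present paper gives no proof of it. Your argument is the standard one and essentially what the cited source does: for $Q_{2^n}$ the unique involution $z$ lies in $\langle v\rangle$ for every $v\neq e$ (since every element has $2$-power order), so $z$ dominates; conversely, from a dominating vertex $x$ of order $2^k$ you pass to the involution $z=x^{2^{k-1}}$, which still dominates because $\langle z,v\rangle\subseteq\langle x,v\rangle$ and subgroups of cyclic groups are cyclic, and $z$ is the unique involution since a cyclic $2$-group has only one element of order $2$; the classical Burnside-type classification (a finite $p$-group with a unique subgroup of order $p$ is cyclic, or generalized quaternion when $p=2$) together with non-abelianness then forces $G\cong Q_{2^n}$. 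All the steps check out, including the small ones you handled implicitly ($z\neq e$, and the identity being adjacent to everything), so the only care needed is, as you say, citing the structure theorem accurately.
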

\begin{lemma}[Theorem 3.1, \cite{enhancedpwrgrapbb3}]\label{dom if gcd =1}
Let $G$ be a finite group and $n \in \mathbb{N}$. If $\text{gcd}(|G|, n)=1$, then the enhanced power graph $\mathcal{G}_e(G \times \mathbb{Z}_n)$ 
is dominatable.
\end{lemma}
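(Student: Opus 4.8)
The plan is to exhibit an explicit dominating vertex other than the identity. The natural candidate is $z = (e_G, x)$, where $e_G$ is the identity of $G$ and $x$ is a generator of $\mathbb{Z}_n$, so that $z$ has order exactly $n$ and $z \neq (e_G, 0)$. Since two vertices of an enhanced power graph are adjacent precisely when they lie in a common cyclic subgroup, the goal reduces to showing that for every $v = (g, y) \in G \times \mathbb{Z}_n$, the pair $z$ and $v$ are both powers of a single element $w$.

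First I would fix an arbitrary vertex $v = (g, y)$ and choose the witness $w = (g, x)$, that is, keep the $G$-coordinate of $v$ and place the chosen generator $x$ of $\mathbb{Z}_n$ in the second coordinate. Writing $m = \mathrm{o}(g)$, the hypothesis $\gcd(|G|, n) = 1$ together with $m \mid |G|$ gives $\gcd(m, n) = 1$; as the two coordinates of $w$ then have coprime orders $m$ and $n$, the element $w$ has order $\mathrm{lcm}(m, n) = mn$ and $\langle w \rangle$ is cyclic.

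The heart of the argument is to realize both $z$ and $v$ as powers of $w$. Computing $w^k = (g^k, kx)$, the equation $w^k = z = (e_G, x)$ amounts to the simultaneous congruences $k \equiv 0 \pmod{m}$ and $k \equiv 1 \pmod{n}$, while $w^l = v = (g, y)$ amounts to $l \equiv 1 \pmod{m}$ together with $lx = y$, the latter being a single congruence $l \equiv i \pmod{n}$ once $y$ is written as $y = ix$ (possible because $x$ generates $\mathbb{Z}_n$). Both systems are solvable by the Chinese Remainder Theorem precisely because $\gcd(m, n) = 1$, and this is exactly where the coprimality hypothesis enters in an essential way. Hence $z$ and $v$ both lie in $\langle w \rangle$ and are therefore adjacent; as $v$ was arbitrary, $z$ dominates $\mathcal{G}_e(G \times \mathbb{Z}_n)$, so the graph is dominatable.

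I expect the only delicate point to be the bookkeeping that keeps the construction uniform across all vertices at once: the witness $w$ is allowed to depend on $v$, but the dominating vertex $z$ must be fixed in advance, and one must verify that the CRT solutions recover exactly $z$ and $v$ rather than merely elements of the same order. I would also note that no commutativity of $G$ is needed here, since the entire verification takes place inside the cyclic group $\langle w \rangle$, and the fact that $z = (e_G, x)$ is central in the direct product is what makes the two second-coordinate conditions decouple cleanly from the $G$-coordinate conditions.
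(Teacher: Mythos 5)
Your proposal is correct and follows essentially the same route the paper itself uses when it proves the dominating-vertex claim inside Theorem \ref{thm: all dom of G times Zn}: the same candidate $(e_G,x)$ with $x$ a generator of $\mathbb{Z}_n$, the same witness $w=(g,x)$ depending on the target vertex, and the same decoupled congruences, which the paper solves with explicit Euler-theorem exponents $m^{\phi(n)}$ and $n^{\phi(m)}$ where you invoke the Chinese Remainder Theorem abstractly. The only point worth flagging is the implicit assumption $n>1$, needed so that $(e_G,x)$ is not the identity; this is inherent to the statement as quoted, not a defect of your argument.
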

\begin{lemma}[Lemma 2.1, \cite{enhancedpwrgrapbb3}]\label{same ord elmnt but disnt cyclc sbg , not adjacnt i enhacd pwr grp}
Let $a, b\in G$ with $\text{o}(a)=\text{o}(b)$ and $\langle a\rangle\neq \langle b\rangle$. Then $x$ is not adjacent with $y$  for every $x\in \mathcal{G}en(a)$ and $y\in \mathcal{G}en(b).$
\end{lemma}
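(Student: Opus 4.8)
The plan is to argue by contradiction, exploiting the fact that every finite cyclic group has a unique subgroup of each order dividing its order. First I would unpack the hypotheses. Since $x\in\mathcal{G}en(a)$, the element $x$ generates the cyclic group $\langle a\rangle$, so $\langle x\rangle=\langle a\rangle$ and in particular $\text{o}(x)=\text{o}(a)$; likewise $y\in\mathcal{G}en(b)$ gives $\langle y\rangle=\langle b\rangle$ and $\text{o}(y)=\text{o}(b)$. Combined with the hypothesis $\text{o}(a)=\text{o}(b)$, this yields $\text{o}(x)=\text{o}(y)$, while the assumption $\langle a\rangle\neq\langle b\rangle$ translates into $\langle x\rangle\neq\langle y\rangle$.

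Next I would suppose, toward a contradiction, that $x$ and $y$ are adjacent in $\mathcal{G}_e(G)$. By Definition~\ref{defn: enhcdpowr graph}, there exists $w\in G$ such that both $x$ and $y$ are powers of $w$, that is, both lie in the cyclic subgroup $\langle w\rangle$. Consequently $\langle x\rangle$ and $\langle y\rangle$ are both subgroups of the cyclic group $\langle w\rangle$, and they share the common order $\text{o}(x)=\text{o}(y)$.

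The crux is now the standard structural fact that a finite cyclic group has exactly one subgroup of each order dividing its order. Applying this to $\langle w\rangle$ at the common order $\text{o}(x)=\text{o}(y)$ forces $\langle x\rangle=\langle y\rangle$, which contradicts $\langle x\rangle\neq\langle y\rangle$ established above. Hence no such $w$ can exist, so $x$ and $y$ are non-adjacent; since $x\in\mathcal{G}en(a)$ and $y\in\mathcal{G}en(b)$ were arbitrary, the conclusion holds for all such pairs.

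I do not anticipate a serious obstacle, as the argument is short and its only ingredient beyond the definitions is the uniqueness of subgroups of a given order in a cyclic group. The one point worth stating carefully is that adjacency $x\sim y$ genuinely supplies a \emph{single} cyclic group $\langle w\rangle$ containing both $x$ and $y$ — this is precisely the feature that the enhanced power graph encodes and that separates it from the commuting graph — so that the uniqueness argument can legitimately be carried out inside $\langle w\rangle$.
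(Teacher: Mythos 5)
Your proof is correct. Note that the paper does not prove this lemma itself --- it imports it as Lemma 2.1 from the cited reference \cite{enhancedpwrgrapbb3} --- and your argument, reducing adjacency of $x$ and $y$ to two subgroups of equal order inside the single finite cyclic group $\langle w\rangle$ and invoking the uniqueness of a subgroup of each given order there, is precisely the standard argument one expects (and the same uniqueness fact the authors use elsewhere, e.g.\ in the proof of Lemma \ref{lema: p and p^i orderd path connd abelong< b>}).
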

%Cameron et.al in \cite[Theorem 28]{firstenhcedpwrstrctreaacns1} characterized when the power graph and enhanced power graph of a certain group are equal. 
%
%
%\begin{theorem}
%\label{thm:cameron}
%For a finite group $G,$ the following conditions are equivalent:
%\begin{enumerate} 
%\item the power graph of $G$ is equal to the enhanced power graph;
%\item every cyclic subgroup of $G$ has prime power order;
%\item the prime graph of $G$ is a null graph.
%\end{enumerate}
%\end{theorem} 
%
%Doostabadi et.al in \cite{doostabadi} and Panda et.al in \cite{panda-dalal-kumar} proved the following theorem on the connectedness of a $p$-group. 
%
%\begin{theorem}
%\label{thm:connectivity_p_group_doostabadi}
%Let $G$ be a finite $p$-group such that $G$ is neither cyclic nor generalized quaternion group. Then $\kappa(\mathcal{P}(G))=1.$ Moreover, the number of connected components of $G$ is $\frac{p^r-1}{p-1}.$
%\end{theorem}
%
We next prove some important lemmas which are used to prove our main theorems.
\begin{lemma}
\label{lemma:any_odtwo_commuting_elements_of_prime_order_adjacent}
Let $G$ be a finite group and $x, y \in G \setminus
\{e\}$ be such that $\text{gcd}(\text{o}(x), \text{o}(y)) = 1 $ and $ xy = yx.$ Then, $x \sim y$ in $\mathcal{G}_e^*(G)$. 
\end{lemma}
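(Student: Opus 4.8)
The plan is to exhibit an explicit element $w \in G$ such that both $x$ and $y$ are powers of $w$, which by Definition~\ref{defn: enhcdpowr graph} immediately gives $x \sim y$ in $\mathcal{G}_e(G)$; since $x, y \in G \setminus \{e\}$, the same adjacency holds in the deleted graph $\mathcal{G}_e^*(G)$. The natural candidate is $w = xy$. Because $xy = yx$, the elements $x$ and $y$ commute, so $\langle x \rangle$ and $\langle y \rangle$ generate a finite abelian subgroup, and one can compute powers of $w$ freely using $w^k = x^k y^k$. Writing $\mathrm{o}(x) = m$ and $\mathrm{o}(y) = n$ with $\gcd(m,n) = 1$, I first want to verify that $\mathrm{o}(w) = mn$: the relation $w^k = x^k y^k$ together with $\langle x \rangle \cap \langle y \rangle = \{e\}$ (a consequence of coprime orders) forces $x^k = y^{-k} \in \langle x \rangle \cap \langle y \rangle = \{e\}$, so $m \mid k$ and $n \mid k$, hence $mn \mid k$; conversely $w^{mn} = x^{mn} y^{mn} = e$. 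Thus $\langle w \rangle$ is cyclic of order $mn$.

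The key step is then to recover both $x$ and $y$ as powers of $w$. Since $\gcd(m,n)=1$, the Chinese Remainder Theorem guarantees an integer $a$ with $a \equiv 1 \pmod{m}$ and $a \equiv 0 \pmod{n}$; for this $a$ we get $w^a = x^a y^a = x^a \cdot e = x^a = x$, using $a \equiv 1 \pmod m$ in the first factor and $n \mid a$ in the second. Symmetrically, choosing $b$ with $b \equiv 0 \pmod m$ and $b \equiv 1 \pmod n$ yields $w^b = y$. Hence both $x$ and $y$ lie in $\langle w \rangle$, so they are joined in $\mathcal{G}_e(G)$.

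The only genuine subtlety is establishing $\langle x \rangle \cap \langle y \rangle = \{e\}$, which is where the coprimality hypothesis does its work: any element of the intersection has order dividing both $m$ and $n$, hence dividing $\gcd(m,n)=1$, so it is the identity. This in turn is what makes $w = xy$ have the full order $mn$ and lets the Chinese Remainder extraction go through cleanly. Everything else is routine manipulation of commuting powers. I do not expect any real obstacle here; the lemma is essentially the observation that two commuting elements of coprime orders generate a cyclic group, and the content is just to package that fact in the language of the enhanced power graph.
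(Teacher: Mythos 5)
Your proposal is correct and follows essentially the same route as the paper: both take $w = xy$ as the witness and recover $x$ and $y$ as powers of $w$ using an exponent that is $\equiv 1 \pmod{\mathrm{o}(x)}$ and $\equiv 0 \pmod{\mathrm{o}(y)}$ (the paper produces this exponent explicitly as $n^{\phi(m)}$ via Euler's theorem, whereas you invoke the Chinese Remainder Theorem abstractly). Your intermediate claims that $\langle x \rangle \cap \langle y \rangle = \{e\}$ and $\mathrm{o}(w) = mn$ are correct but superfluous, since the CRT extraction already goes through without them.
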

\begin{proof}
Let $\text{o}(a)=m$ and $\text{o}(b)=n.$ Now $\text{gcd}(m, n)=1$ implies that $n^{\phi(m)}=mk+1, k\in\mathbb{N}, (\text{ by Euler's formula }).$ Again, $ab=ba$ implies that $(ab)^{n^{\phi(m)}}=a^{n^{\phi(m)}}b^{n^{\phi(m)}}=a^{n^{\phi(m)}}=a^{mk+1}=a.$ As a result, $a\in \langle ab \rangle.$ Similarly we can prove that $b\in \langle ab\rangle.$ Consequently, $a\sim b$ in $\mathcal{G}^{*}(G).$ 	
\end{proof}	
\begin{lemma}\label{lema: p and p^i orderd path connd abelong< b>}
Let $G$ be a $p$-group. Let $a, b$ be two elements of $G$ of order $p, p^i (i\geq 1)$ respectively. If there is a path between $a$ and $b$ in $\mathcal{G}_e^*(G),$ then $\langle a\rangle\subset \langle b\rangle.$	In particular, if both a and b have order p, then, $\langle a \rangle = \langle b \rangle.$
\end{lemma}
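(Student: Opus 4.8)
The plan is to exploit the fact that every cyclic $p$-group has a \emph{unique} subgroup of order $p$, combined with the defining property of the enhanced power graph (Definition~\ref{defn: enhcdpowr graph}) that two vertices are adjacent precisely when they lie in a common cyclic subgroup. First I would record the one-step observation: if $u \sim v$ in $\mathcal{G}_e^*(G)$, then there is $w \in G$ with $u, v \in \langle w \rangle$, so both $\langle u \rangle$ and $\langle v \rangle$ are subgroups of the cyclic $p$-group $\langle w \rangle$. Since $G$ is a $p$-group, $\langle w \rangle$ has a unique subgroup of order $p$, and this is the structural input that drives everything.

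The core of the argument is an invariant maintained along the path. Write the path as $a = v_0 \sim v_1 \sim \cdots \sim v_k = b$, where each $v_j \neq e$ (the identity having been deleted in $\mathcal{G}_e^*(G)$). I claim $\langle a \rangle \subseteq \langle v_j \rangle$ for every $j$, which I would prove by induction on $j$. The base case $j = 0$ is immediate. For the inductive step, assume $\langle a \rangle \subseteq \langle v_j \rangle$. Since $v_j \sim v_{j+1}$, choose $w$ with $v_j, v_{j+1} \in \langle w \rangle$; then $\langle a \rangle \subseteq \langle v_j \rangle \subseteq \langle w \rangle$, so $\langle a \rangle$ is a subgroup of order $p$ of the cyclic $p$-group $\langle w \rangle$, hence \emph{the} unique such subgroup. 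On the other hand $v_{j+1} \in \langle w \rangle \setminus \{e\}$, so $\langle v_{j+1} \rangle$ is a nontrivial cyclic $p$-subgroup of $\langle w \rangle$ and therefore contains a subgroup of order $p$; by uniqueness that subgroup is $\langle a \rangle$, which gives $\langle a \rangle \subseteq \langle v_{j+1} \rangle$ and closes the induction.

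Taking $j = k$ then yields $\langle a \rangle \subseteq \langle v_k \rangle = \langle b \rangle$, the desired conclusion (the symbol $\subset$ in the statement is to be read in the non-strict sense). For the final assertion, if moreover $\text{o}(b) = p$, then $\langle a \rangle$ and $\langle b \rangle$ both have order $p$, so the containment $\langle a \rangle \subseteq \langle b \rangle$ forces $\langle a \rangle = \langle b \rangle$.

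I do not anticipate a serious obstacle here: the entire proof rests on the uniqueness of the order-$p$ subgroup in a cyclic $p$-group. The one point requiring care is to propagate the \emph{containment} $\langle a \rangle \subseteq \langle v_j \rangle$ at each step rather than merely tracking the adjacency relation, since it is this containment — not adjacency in isolation — that is preserved under the induction.
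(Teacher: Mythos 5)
Your proof is correct and takes essentially the same approach as the paper's: an induction along the path that maintains the invariant $\langle a \rangle \subseteq \langle v_j \rangle$, driven by the uniqueness of the subgroup of each given order in a cyclic $p$-group. The only cosmetic difference is that the paper's inductive step splits into the two cases $v_j \in \langle v_{j+1} \rangle$ or $v_{j+1} \in \langle v_j \rangle$, whereas you invoke the unique order-$p$ subgroup of the common cyclic overgroup $\langle w \rangle$ directly --- the same structural fact in slightly cleaner packaging.
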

\begin{proof}
Let $a=a_1\sim a_2\sim \cdots\sim a_m=b$ be  a path between $a$ and $b.$ Now $a=a_1\sim a_2$ implies that there exists $x\in G$ such that $a_1, a_2\in \langle x\rangle.$ As a result, $a_1\in \langle a_2\rangle$ (since a cyclic group has a unique subgroup corresponding to each divisor of the order of the cyclic group). Now, $a_2\sim a_3$ and $G$ is a $p$-group, then either $a_2\in \langle a_3\rangle$ or $a_3\in \langle a_2\rangle.$ Clearly for  both of the cases $a_1\in \langle a_3\rangle.$ Continuing this process we can conclude that $a\in \langle b\rangle.$	
\end{proof}
\begin{lemma}
\label{lem:structurelemmauniqueprimeordersubgroup}
	Let $G$ be any non-cyclic group. For any dominating vertex $v(\neq e)$ of $G$ there exists a prime $p$ dividing $\text{o}(v)$ such that $G$ has a unique subgroup of order $p.$
\end{lemma}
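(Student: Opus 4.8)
The plan is to unpack the definition of a dominating vertex of $\mathcal{G}_e(G)$ and then lean on the single structural fact that a cyclic group contains exactly one subgroup of each order dividing its cardinality. Since $v(\neq e)$ is dominating, it is adjacent to every other vertex; in particular, if $p$ is any prime dividing $\text{o}(v)$ and $u\in G$ is any element of order $p$ (distinct from $v$), then $u\sim v$ in $\mathcal{G}_e(G)$, so by Definition~\ref{defn: enhcdpowr graph} there is some $w\in G$ with $u,v\in\langle w\rangle$. This is the only mechanism I will use.

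First I would fix a prime $p$ dividing $\text{o}(v)$ (one exists because $v\neq e$ forces $\text{o}(v)>1$) and single out $H_p:=\langle v^{\text{o}(v)/p}\rangle$, the unique subgroup of order $p$ sitting inside the cyclic group $\langle v\rangle$. The entire goal then reduces to showing that $H_p$ is in fact the \emph{only} subgroup of order $p$ in all of $G$.

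Next I would take an arbitrary element $u\in G$ of order $p$ and use the adjacency $u\sim v$ to produce $w\in G$ with $u,v\in\langle w\rangle$. Because $\langle w\rangle$ is cyclic and contains $v$, it contains $\langle v\rangle$ and hence $H_p$; it of course also contains $\langle u\rangle$. Now both $H_p$ and $\langle u\rangle$ are subgroups of order $p$ of the cyclic group $\langle w\rangle$, so by uniqueness of subgroups of a given order in a cyclic group we must have $\langle u\rangle=H_p$. Since $u$ was an arbitrary order-$p$ element, every subgroup of order $p$ of $G$ coincides with $H_p$, which is exactly the assertion that $G$ has a unique subgroup of order $p$.

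Honestly, I expect no serious obstacle here: the only idea is recognizing that adjacency to the dominating vertex $v$ forces every element of order $p$ into a common cyclic overgroup of $v$, after which the uniqueness-of-subgroups property of cyclic groups closes the argument immediately. Two minor points to record cleanly are the boundary case $\text{o}(v)=p$ (where $u=v$ is excluded but the conclusion $\langle u\rangle=H_p$ is unaffected) and the fact that the non-cyclicity hypothesis on $G$ plays no role in this direction. I would also remark that the argument actually proves the stronger statement that $G$ has a unique subgroup of order $p$ for \emph{every} prime $p\mid\text{o}(v)$; the existence of at least one such prime, which is all the lemma requires, is then trivial.
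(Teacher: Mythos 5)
Your proof is correct and follows essentially the same route as the paper's: fix a prime $p$ dividing $\text{o}(v)$, exhibit the candidate subgroup inside $\langle v\rangle$, and use the dominating property to place any order-$p$ element $u$ together with $v$ in a common cyclic subgroup, where uniqueness of subgroups of a given order in cyclic groups forces $\langle u\rangle$ to coincide with the candidate. The only cosmetic difference is that you invoke this uniqueness directly inside $\langle w\rangle$, whereas the paper first deduces $u\in\langle v\rangle$ and then finishes with a division-algorithm argument; your shortcut is slightly cleaner but not a different method.
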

\begin{proof}
	Let $v \neq e$ be a dominating vertex and $\text{o}(v) = m$. Let $p$ be a prime divisor of $m$ and $m=rp.$  We
	claim that $H=\langle v^r \rangle $ is the unique subgroup of order $p$  in $G.$ Consider $x \in G$ such that $\text{o}(x) = p.$
	Since $v$ is dominating vertex, we have $x \sim v.$ Thus, there exists a cyclic subgroup $A$ such that
	$x, v \in A.$ Then $\text{o}(x) = p$ implies that $x \in \langle v \rangle.$ If $x= v^q$, by division algorithm it can be shown that $q$ has to be a multiple of $r$ and thus $x \in H.$ This completes the proof. 
\end{proof}
We next move on to the most important result of this section.
\begin{theorem}
\label{thm:power_connected_iff_enhcdpower_connected}
For any group $G$, the graph $\mathcal{P}^*(G)$
is connected if and only if the graph $\mathcal{G}_{e}^*(G)$ is connected. 
\end{theorem}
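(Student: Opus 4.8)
The plan is to exploit the containment of the power graph inside the enhanced power graph. First I would record the basic structural fact that $\mathcal{P}^*(G)$ is a \emph{spanning subgraph} of $\mathcal{G}_e^*(G)$: both graphs have vertex set $G \setminus \{e\}$, and whenever $u \sim v$ in $\mathcal{P}^*(G)$ — say $v = u^m$ — then $u$ and $v$ are both powers of $u$, so $u \sim v$ in $\mathcal{G}_e^*(G)$ as well. This immediately settles the easy direction: any path in $\mathcal{P}^*(G)$ is a path in $\mathcal{G}_e^*(G)$, so if $\mathcal{P}^*(G)$ is connected then so is $\mathcal{G}_e^*(G)$.

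The content lies in the reverse implication, and the key reduction I would carry out is to show that the \emph{edges} of $\mathcal{G}_e^*(G)$ create no connectivity beyond what $\mathcal{P}^*(G)$ already provides. Precisely, I claim: if $u \sim v$ in $\mathcal{G}_e^*(G)$, then $u$ and $v$ lie in the same connected component of $\mathcal{P}^*(G)$. To see this, invoke the definition of the enhanced power graph to get $w \in G$ with $u, v \in \langle w \rangle$. Since $u, v \neq e$, the witness $w$ cannot be the identity, so $w$ is a genuine vertex of $\mathcal{P}^*(G)$. Now $u$ and $v$ are each a power of $w$, hence $u \sim w$ and $v \sim w$ in the power graph (treating the degenerate cases $u = w$ or $v = w$ trivially). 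Thus $u - w - v$ is a walk in $\mathcal{P}^*(G)$, placing $u$ and $v$ in one component.

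With the claim in hand, the reverse direction is a routine path-lifting argument: given any path $u = u_0 \sim u_1 \sim \cdots \sim u_k = v$ in $\mathcal{G}_e^*(G)$, each consecutive pair $u_i, u_{i+1}$ lies in a single component of $\mathcal{P}^*(G)$ by the claim, so transitivity of the relation ``lies in the same component of $\mathcal{P}^*(G)$'' shows that $u$ and $v$ are connected in $\mathcal{P}^*(G)$. Since the two graphs share the vertex set $G \setminus \{e\}$, connectivity of $\mathcal{G}_e^*(G)$ forces connectivity of $\mathcal{P}^*(G)$, completing the equivalence.

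I expect the only genuine subtlety — and hence the main obstacle — to be the claim in the second paragraph, and within it the single observation that an enhanced-power edge witnessed by a common cyclic generator $w$ is automatically routed through $w$ in the power graph, because membership in $\langle w \rangle$ is exactly the relation ``is a power of $w$.'' Everything else (the spanning-subgraph remark and the path lifting) is formal. One should still double-check the boundary cases where $w$ coincides with $u$ or $v$ and confirm $w \neq e$, but these are immediate from $u, v \neq e$.
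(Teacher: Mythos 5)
Your proposal is correct and follows essentially the same route as the paper: the forward direction via the spanning-subgraph observation, and the reverse direction by replacing each enhanced-power edge $a_i \sim a_{i+1}$ with the detour $a_i \sim b_i \sim a_{i+1}$ through the cyclic witness $b_i$, exactly as in the paper's proof. Your extra checks (that the witness $w \neq e$ and the degenerate cases $u=w$ or $v=w$) are minor points the paper leaves implicit, but the argument is the same.
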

\begin{proof}
The forward implication is easy. That is, if $\mathcal{P}^*(G)$ is connected then $\mathcal{G}_{e}^*(G)$ is of course connected. We prove the other direction. Let, $\mathcal{G}_{e}^*(G)$ be connected and $a,b \in \mathcal{P}^*(G).$ As $\mathcal{G}_{e}^*(G)$ is connected, there exists a path between $a=a_1\sim a_2\sim \cdots\sim a_m=b.$ Now, $a_i \sim a_{i+1}$ in $\mathcal{G}_e^*(G) \implies $ there exist $b_i \in G$ such that both $a_i$ and $ a_{i+1} \in \langle b_i \rangle.$
In that case, $a_i \sim b_i \sim a_{i+1}.$ Therefore, we have 
$a=a_1 \sim b_1 \sim a_2 \sim b_2 \sim a_3 \cdots \sim a_m=b$ in $\mathcal{P}^*(G).$ This completes the proof. 
\end{proof}
Therefore, for any graph $G$, the information about the connectivity of one of the two graphs
$\mathcal{P}^*(G)$ and $\mathcal{G}_e^*(G)$ gives information about the connectivity of the other one. 
\vspace{2 mm} 
\section{Proofs of main results about Vertex Connectivity of $\mathcal{G}_e(G)$ when $G$ is abelian}
\label{sec:3}
\vspace{2 mm}
%We next prove Theorem \ref{vc=1 for abln and non abln group}.   Theorem \ref{thm:cameron} and Theorem \ref{thm:connectivity_p_group_doostabadi} immediately settles it. Here we mention another independent proof of this.
\begin{proof}[Proof of Theorem \ref{vc=1 for abln and non abln group}]
First suppose that $G$ is non-cyclic abelian $p$-group. Clearly $G$ has at least two distinct cyclic subgroups $H_1= \langle a \rangle$ and $H_2=\langle b \rangle$ of order $p.$ Now by Lemma \ref{lema: p and p^i orderd path connd abelong< b>}, there is no path joining $a$ and $b$ in $\mathcal{G}^*_e(G),$ otherwise $H_1=H_2.$ The proof is complete. 
%Let $x\in H_1$ and $y\in H_2.$ If there is a path between $x$ and $y$ in $\mathcal{G}^*_e(G),$ then by Lemma \ref{lema: p and p^i orderd path connd abelong< b>}, a cyclic group contains two distinct cyclic subgroups $H_1$ and $H_2$ each of order $p,$ a contradiction. Suppose $G$ is non abelian and non generalized quaternion $p$-group. Here also $G$ has at least two distinct minimal subgroups  of order $p.$ Now, similarly as abelian case we can prove that $\mathcal{G}^*_e(G)$ is disconnected.  
\end{proof} 
%\begin{theorem}
%	\label{thm:components_of_G_abelian}
%	Let G be a finite abelian $p$-group. Suppose
%	$$G = Z_{p^{t_1}}
%	\times  Z_{p^{t_2}}
%	\times  \dots \times 
%	Z_{p^{t_r}}.$$ where $t_1 \leq t_2 \leq \dots \leq t_r.$
%	Then, the number of components of $\mathcal{G}_{e}^{**} (G )$  
%	is $\frac{p^r-1}{p-1}.$ 
%\end{theorem}
%
%\begin{proof}
%	It is easy to show that there are $p^{r}-1$ elements of order $p$.
%	For any element $a$ of order $p$,  the $p-1$ non-zero 
%	scalar multiples of $a$ must be in the same component.
%	Moreover, by Lemma \ref{lem:two_elts_of_order_p}, 
%	if any two elements of order $p$ are connected by a path, 
%	then one of them must be the multiple of another. Henceforth, 
%	there are exactly  $p-1$ members of order $p$ in any component. 
%	Thus, the number of connected components of 
%	$\mathcal{G}_{e}^* (G )$ is $\frac{p^{r}-1}{p-1}.$ 
%\end{proof}
\begin{proof}[Proof of Theorem \ref{ enhcd vc 1 iff g is p-group}] 
$G$ is non-cyclic abelian $p$-group. Therefore, by Theorem \ref{vc=1 for abln and non abln group}, $\kappa(\mathcal{G}_e(G))=1.$
	
For the converse part, let $G$ be a finite abelian group which is not a $p$-group. 
Let, $p_1, p_2, \cdots, p_k$ be the prime factors of $|G|.$ 
Let, $a, b \in G$ and $\text{o}(a)= p_1^{r_1} p_2^{r_2} \cdots p_k^{r_k}$
and $\text{o}(b)=p_1^{s_1} p_2^{s_2} \cdots p_k^{s_k}$. We consider the
following two cases: \\
Case 1: There exists distinct $i$ and $j$  with 
$r_i \neq 0$ and $s_j \neq 0$.  
Then the elements ${p_1^{r_1} p_2^{r_2} \cdots p_{i-1}^{r_{i-1}} p_{i+1}^{r_{i+1}} \cdots p_k^{r_k}}a$ and $ {p_1^{s_1} p_2^{s_2} \cdots p_{j-1}^{s_{j-1}} p_{j+1}^{s_{j+1}} \cdots p_j^{s_j}}b$ 
are 
of order $p_i^{r_i}$ and $p_j^{s_j}$ respectively. 
Thus,
by Lemma \ref{lemma:any_odtwo_commuting_elements_of_prime_order_adjacent}, $ {p_1^{r_1} p_2^{r_2} \cdots p_{i-1}^{r_{i-1}} p_{i+1}^{r_{i+1}} \dots p_k^{r_k}} a$ and ${p_1^{s_1} p_2^{s_2} \cdots p_{j-1}^{s_{j-1}} p_{j+1}^{s_{j+1}} \cdots p_j^{s_j}} b$ are adjacent. Therefore we
have 
\[ a \sim {p_1^{r_1} p_2^{r_2} \cdots p_{i-1}^{r_{i-1}} p_{i+1}^{r_{i+1}} \cdots p_k^{r_k}} a \sim  {p_1^{s_1} p_2^{s_2} \cdots p_{j-1}^{s_{j-1}} p_{j+1}^{s_{j+1}} \cdots p_j^{s_j}}
b \sim b.\] That is, there 
exists a path of length $\leq 3$ between $a$ and $b$. 
We observe that this case takes care of everything except 
when both $\text{o}(a)$ and $\text{o}(b)$ are power of the same prime 
$p_{\ell}$ for some $ 1 \leq \ell \leq k$ which we consider next.  \\
Case 2: $\text{o}(a)=p_{\ell}^{r_{\ell}}$ and $\text{o}(b)=p_{\ell}^{s_{\ell}}.$  Let, $c$ 
be an element of order $p_i$ in $G$ with $i \neq \ell$. Then by Lemma \ref{lemma:any_odtwo_commuting_elements_of_prime_order_adjacent},
we have $a \sim c \sim b$. 
Thus, $\mathcal{G}_e^*(G)$ is connected. This completes the proof.
\end{proof}
By Theorem \ref{thm:power_connected_iff_enhcdpower_connected}, we immediately get the following corollary on the connectivity of the power graph. 
 \begin{corollary}
\label{vc 1 iff g is p-group}
Let $G$ be a finite non-cyclic abelian group. Then $\kappa(\mathcal{P}(G))$ is $1$ if and only if $G$ is a $p$-group.  
\end{corollary}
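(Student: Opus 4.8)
The plan is to deduce the corollary directly from the two results that immediately precede it: Theorem \ref{thm:power_connected_iff_enhcdpower_connected}, which transfers connectivity information between $\mathcal{P}^*(G)$ and $\mathcal{G}_e^*(G)$, and Theorem \ref{ enhcd vc 1 iff g is p-group}, the enhanced-power-graph analogue of the statement we want. The bridge between ``vertex connectivity equal to $1$'' and ``the deleted graph is disconnected'' is the observation that the identity $e$ is a dominating vertex of $\mathcal{P}(G)$: for every $v \in G$ we have $v^{\text{o}(v)} = e$, so $e \sim v$ in $\mathcal{P}(G)$. In particular $\mathcal{P}(G)$ is connected, and since $G$ is non-cyclic it is not complete, so the quantity $\kappa(\mathcal{P}(G))$ is an honest cut size.

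First I would record an elementary fact about any graph $\Gamma$ possessing a dominating vertex $d$: a set $S$ can be a vertex cut of $\Gamma$ only if $d \in S$, for otherwise $d$ remains adjacent to every surviving vertex and keeps $\Gamma - S$ connected. Applying this with $\Gamma = \mathcal{P}(G)$ and $d = e$, a single-vertex cut of $\mathcal{P}(G)$ must be $\{e\}$ itself; combined with the connectivity of $\mathcal{P}(G)$ this yields
\[
\kappa(\mathcal{P}(G)) = 1 \iff \mathcal{P}^*(G) \text{ is disconnected}.
\]
The same reasoning applied to $\mathcal{G}_e(G)$, where $e$ is likewise dominating, gives $\kappa(\mathcal{G}_e(G)) = 1 \iff \mathcal{G}_e^*(G)$ is disconnected.

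Next I would chain the equivalences. By Theorem \ref{thm:power_connected_iff_enhcdpower_connected}, $\mathcal{P}^*(G)$ is disconnected if and only if $\mathcal{G}_e^*(G)$ is disconnected. By the previous paragraph this is equivalent to $\kappa(\mathcal{G}_e(G)) = 1$, which by Theorem \ref{ enhcd vc 1 iff g is p-group} holds if and only if $G$ is a $p$-group. Reading the chain from end to end gives $\kappa(\mathcal{P}(G)) = 1 \iff G$ is a $p$-group, as required.

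Since every step is an application of an already-established theorem, I do not anticipate a genuine obstacle. The only point requiring care is the elementary dominating-vertex lemma, which is what converts the ``$\kappa = 1$'' phrasing of the cited theorems into the statements about disconnectedness of the deleted graphs; I would state it explicitly rather than leave it implicit, since it is used twice and is precisely what makes the transfer via Theorem \ref{thm:power_connected_iff_enhcdpower_connected} legitimate.
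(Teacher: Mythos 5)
Your proposal is correct and takes essentially the same route as the paper, whose entire proof is to combine Theorem \ref{thm:power_connected_iff_enhcdpower_connected} with Theorem \ref{ enhcd vc 1 iff g is p-group}. The only difference is that you make explicit the bridging fact the paper leaves implicit, namely that since the identity is a dominating vertex of both $\mathcal{P}(G)$ and $\mathcal{G}_e(G)$, any vertex cut must contain it, so $\kappa=1$ for either graph is equivalent to disconnectedness of the corresponding deleted graph; this is a worthwhile clarification but not a different argument.
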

 \begin{proof}[Proof of Theorem \ref{thm: all dom of G times Zn}]
We show that $(e, x)$ is a dominating vertex, where $e$ is the identity element of the group $G$ and $x$ is a any element of the group $\mathbb{Z}_n.$ Consider an arbitrary vertex $(g, y)$  of the graph  $\mathcal{G}_e({G\times \mathbb{Z}_n}).$

\noindent
Case 1: Let $g=e.$ Let $a$ be a generator of the cyclic group $\mathbb{Z}_n.$ Now $y\in \mathbb{Z}_n$ implies that $(e, y), (e, x)\in \langle(e, a)\rangle$ and so $(e, x)\sim (e, y).$
	
\noindent
Case 2: Let $g\neq e$ and $y=0.$ [Here $0$ actually means the additive identity of the group $\mathbb{Z}_n].$  We show that $(g, 0), (e, x)\in \langle(g, a)\rangle.$ First we show that $(e, a)\in \langle(g, a)\rangle.$ Let $\text{o}(g)=m.$ Now $\text{gcd}(|G|, n)=1$ implies that $\text{gcd}(m, n)=1.$ Then by Euler's Theorem $m^{\phi(n)}=n\ell+1, \ell\in \mathbb{N}.$ Therefore, $(g, a)^{m^{\phi(n)}}=(g^{m^{\phi(n)}}, a^{m^{\phi(n)}})=(e, a^{n\ell+1})=(e, a).$ Hence, $(e, a)\in \langle (g, a)\rangle.$ Now we show that $(g, 0)\in \langle(g, a)\rangle.$  It is given that $\text{gcd}(m, n)=1.$ So, by the Euler's theorem, $n^{\phi(m)}=mk+1, k\in \mathbb{N}.$ Hence $(g, a)^{n^{\phi(m)}}=(g^{n^{\phi(m)}}, 0)=(g^{mk+1}, 0)=(g, 0).$ Consequently, $(g, 0)\in \langle (g, a) \rangle.$
	
\noindent
Case 3: Let $g\neq e $ and $y\neq 0.$  We show that $(g, y), (e, x)\in \langle(g, a)\rangle.$  Already we have proved  that $(g, 0), (e, x)\in \langle(g, a)\rangle.$  Since $a$ is a generator of $\mathbb{Z}_n$, $(e, y)\in \langle(e, a)\rangle \subset \langle(g, a)\rangle.$ Hence $ (g, y)=(g, 0)(e, y)\in \langle(g, a)\rangle.$ 

To finish the proof we have to show that if $(g, z)$ is a dominating vertex, then $g$ must be the identity of $G.$ Let \[G=\mathbb{Z}_{p^{t_{11}}_1}\times \mathbb{Z}_{p^{t_{12}}_1}\times\mathbb{Z}_{p^{t_{13}}_1}\times\cdots\times\mathbb{Z}_{p^{t_{1k_1}}_1}\times \mathbb{Z}_{p^{t_{21}}_2}\times\mathbb{Z}_{p^{t_{22}}_2}\times\cdots\times\mathbb{Z}_{p^{t_{2k_2}}_2}\times\cdots\times
\mathbb{Z}_{p^{t_{r1}}_r}\times\mathbb{Z}_{p^{t_{r2}}_r}\times\cdots\times\mathbb{Z}_{p^{t_{rk_r}}_r},\] where $k_i\geq2 $ and $t_{i1}\leq t_{i2}\leq\cdots\leq t_{ik_i} $, for all $i\in [r]$. Let \[v=(x_{p^{t_{11}}_1}, x_{p^{t_{12}}_1}, \cdots x_{p^{t_{1k_1}}_1}, x_{p^{t_{21}}_2}, \cdots, x_{p^{t_{2k_2}}_2}, \cdots, x_{p^{t_{1r}}_r}, x_{p^{t_{2r}}_r}, \cdots, x_{p^{t_{rk_r}}_r}, z)\] be a dominating vertex. We will  prove that, for each $i\in [r]$ and $j\in [k_i], x_{p^{t_{i1}}_i}=x_{p^{t_{i2}}_i}=\cdots=x_{p^{t_{ij}}_i}=0.$ [Here $0$ actually means the additive identity of the group $\mathbb{Z}_{p^{t_{ij}}_i}$]. Consider the element \[v'=(0, 0, \cdots, 0, g_{p^{t_{1k_1}}_1}, 0, 0, \cdots, g_{p^{t_{2k_2}}_2}, 0, \cdots, 0, g_{p^{t_{rk_r}}_r}, z'),\] where  $g_{p^{t_{ik_i}}_i}$ is a generator of the cyclic group $\mathbb{Z}_{p^{t_{ik_i}}_i}$ for each $i\in[r]$ and $z'$ is a generator of $\mathbb{Z}_n.$ As $v$ is a dominating vertex of the graph $\mathcal{G}_e(G),$ we have $v\sim v'$ in $\mathcal{G}_e(G).$ Clearly, $v'$ is an element of maximum ordered. So, we have $v\in\langle v'\rangle.$  As a result, for each $i\in [r]$ and $j\in [k_i-1], x_{p^{t_{i1}}_i}=x_{p^{t_{i2}}_i}=\cdots=x_{p^{t_{ij}}_i}=0,$ i.e., \[v=(0, 0, \cdots, 0, x_{p^{t_{1k_1}}_1}, 0, 0, \cdots, 0, x_{p^{t_{2k_2}}_2}, 0, \cdots, 0, x_{p^{t_{rk_r}}_r}, z).\] Now we show that $x_{p^{t_{ik_i}}_i}=0,$ for all $i\in[r].$ Suppose at least one of the $x_{p^{t_{ik_i}}_i}$ is non-zero. Without any loss of generality we assume that $x_{p^{t_{1k_1}}_1}\neq 0.$ Consider $v_1=(x, 0, 0, \cdots, 0, 0, 0),$ ({last zero is the identity of cyclic group $Z_n)$} where $x\in \mathbb{Z}_{p^{t_{1k_1}}_1}$ with $\text{o}(x)=p_1.$ Then $p_1 \text{ divides }\text{o}(v).$ If $v\sim v_1,$ then there exists a cyclic subgroup $C$ of $G\times \mathbb{Z}_n$ such that $v, v_1\in C.$ Then $\text{o}(v_1)=p_1$ and $p_1$ divides $\text{o}(v)$ implies that $v_1\in\langle v\rangle,$ which contradicts that $x\neq 0.$ This completes the proof.
\end{proof}
We next take care about the connectivity of the proper enhanced power graph $\mathcal{G}^{**}_e(G),$ when $G$ is abelian. 
\begin{proof}[Proof of Theorem \ref{thm:**disconnected}]
First we show that if $G_1$ is a $p$-group, then $\mathcal{G}^{**}_e(G)$ is disconnected. By Theorem \ref{thm: all dom of G times Zn}, order of each element of $\mathcal{G}_e^{**}(G)$ is divisible by $p.$ So applying the proof of the lemma \ref{lema: p and p^i orderd path connd abelong< b>}, we get that for two elements $a= (x, 0)$ and $b=(x', y')$ of $\mathcal{G}_e^{**}(G),$ with $\text{o}((x, 0))=p$ and $x'\neq e,$ if there exists any path joining $a$ and $b,$ then $ \langle (x, 0)  \rangle $ is contained in or equal to $ \langle (x', y') \rangle .$ In particular, if both $a$ and $b$ have order $p,$ then the existence of a path joining $a$ and $b$ implies that $\langle a \rangle = \langle b \rangle .$ Since, $G_1$ is noncyclic abelian $p$-group, there exist two elements $a$ and $b$ of order $p$ such that $\langle a \rangle \neq \langle b \rangle .$ So by our previous observation, $a$ is not path connected to $b.$

Conversely, suppose that $G_1$ is non-$p$-group. Then we show that $\mathcal{G}^{**}(G)$ is connected. Here we have two cases. 

Case 1: Let $(x_1, 0)$ and $(x_2, 0)$ be two elements of $G_1\times\mathbb{Z}_n$ such that $x_1\neq e$ and $x_2\neq e.$ Then by same argument as in proof of converse part of Theorem \ref{ enhcd vc 1 iff g is p-group}, $(x_1, 0)$ and $(x_2, 0)$ are path connected in $\mathcal{G}^{**}_e(G).$

Case 2: Let $(x_1, y_1), (x_2, y_2)\in V(\mathcal{G}^{**}_e(G)).$ Clearly, $(x_1, y_1)\sim (x_1, 0)$ and $(x_2, 0)\sim (x_2, y_2).$ Again there is a path between $(x_1, 0)$ and $(x_2, 0)$ in $\mathcal{G}^{**}_e(G)$ by Case 1. Therefore, $(x_1, y_1)$ and $(x_2, y_2)$ are path connected in $\mathcal{G}^{**}_e(G).$ Hence the graph $\mathcal{G}^{**}_e(G)$ is connected. This completes the proof.
\end{proof}
%By Theorem \ref{thm:**disconnected}, we see that $G$ is a non-cyclic abelian non-$p$-group such that $G \cong G_1 \times \mathbb{Z}_n, \text{gcd}(|G_1|, n) =1$ where $G_1$ is a $p$-group, then the proper enhanced power graph $\mathcal{G}^{**}_e(G)$ is disconnected if and only if $G_1$ is a $p$-group. Our next result determines the exact number of components in this case. 
\begin{proof}[Proof of Theorem \ref{vc of enhced pwr raph for grnrral abelian grp}]
	Let $H=\langle a\rangle$ be a cyclic subgroup of $G,$ where $a=(a_{11}, 0, \cdots, 0, a_{21},0,  \cdots, a_{r1}, 0\cdots, 0),$ and $a_{i1}\in \mathbb{Z}_{p_i^{t_{i1}}}$ such that o$(a_{i1})=p_i^{t_{i1}},$ for $i=1, 2, \cdots, r.$
	$H$ is maximal cyclic subgroup of $G.$ 
	% Suppose there exists a cyclic subgroup \[T=\langle (a'_{11}, a'_{12}, \cdots, a'_{1k_1}, a'_{21}, a'_{22}, \cdots, a'_{2k_2}, \cdots, a'_{r1}, a'_{r2}, \cdots, a'_{rk_r})\rangle\] of $G$ such that $H\subset T.$ So, there exists a positive integer $\ell$ such that
	%	\begin{align*}
	%	(a_{11}, 0, \cdots, 0, a_{21},0, & \cdots, 0, \cdots,  a_{r1}, 0\cdots, 0)=\\&\ell(a'_{11}, a'_{12}, \cdots, a'_{1k_1}, a'_{21}, a'_{22}, \cdots, a'_{2k_2}, \cdots, a'_{r1}, a'_{r2}, \cdots, a'_{rk_r}). 
	%	\end{align*}
	%	This implies that $\ell a'_{i1}=a_{i1}$ and $\ell a'_{ij}=0,$ for all $i, j\in [r]$ and $j\neq 1.$ As $a_{i1}'\text{s}$ are generators of the cyclic group $\mathbb{Z}_{p_i^{t_{i1}}}$ and $\ell a'_{i1}=a_{i1},$ then 
	%	$a'_{i1}$ and $\ell a'_{i1}$ are also generators of the cyclic group $\mathbb{Z}_{p_i^{t_{i1}}}.$ This implies that $\text{gcd}(\ell, p_i^{t_{i1}})=1,$ for all $i\in [r].$ Now we have two cases, either $a'_{ij}=0$ or there exists at least one $a'_{i'j'}\neq 0,$ for all $i, j, i', j'\in [r]$ and $j, j'\neq 1.$ For the first case, clearly, $H=T.$ And for the second case, $\ell a'_{i'j'}=0$ implies that $\text{o}(a'_{i'j'})$ divides $\ell.$ This contradicts that $\text{gcd}(\ell, p_i^{t_{i1}})=1,$ as $\text{o}(a'_{i'j'})>1.$ Consequently, $H$ is a maximal cyclic subgroup of $G.$       
	Now we show that for any $b\in G\setminus H,$ there is no edge between $b$ and any element in $\mathcal{G}en(a).$ If possible there exists $x\in \mathcal{G}en(a)$ such that $b\sim x$ in $\mathcal{G}_e(G).$ Then there exists a cyclic subgroup $K$ of $G$ such that $b, x\in K.$ Again $H$ is a maximal cyclic subgroup of $G$ which is also generated by $x$. Therefore, $K=\langle a\rangle=H.$ Hence a contradiction as $b\in G\setminus H.$ Clearly, if we remove the identity and non-identity non-generators elements from the cyclic subgroup $H,$ then the graph will be disconnected and the number of deleted vertices is $p_1^{t_{11}}p_2^{t_{21}}\cdots p_r^{t_{r1}}-\phi(p_1^{t_{11}}p_2^{t_{21}}\cdots p_r^{t_{r1}}).$ Hence the result.
\end{proof}
From Theorem \ref{vc of enhced pwr raph for grnrral abelian grp}, we immediately have the following corollary on the vertex connectivity of power graphs of any non-cyclic abelian group.  
\begin{corollary}
\label{cor:vc_abelian_group_power_graph}
Let $G$ be a non-cyclic abelian group such that \[G\cong\mathbb{Z}_{p^{t_{11}}_1}\times \mathbb{Z}_{p^{t_{12}}_1}\times\mathbb{Z}_{p^{t_{13}}_1}\times\cdots\times\mathbb{Z}_{p^{t_{1k_1}}_1}\times \mathbb{Z}_{p^{t_{21}}_2}\times\mathbb{Z}_{p^{t_{22}}_2}\times\cdots\times\mathbb{Z}_{p^{t_{2k_2}}_2}\times\cdots\times
\mathbb{Z}_{p^{t_{r1}}_r}\times\mathbb{Z}_{p^{t_{r2}}_r}\times\cdots\times\mathbb{Z}_{p^{t_{rk_r}}_r},\] where $k_i\geq 1 $ and $1\leq t_{i1}\leq t_{i2}\leq\cdots\leq t_{ik_i} $, for all $i\in [r]. $ Then \[\kappa(\mathcal{P}(G))\leq p_1^{t_{11}}p_2^{t_{21}}\cdots p_r^{t_{r1}}-\phi(p_1^{t_{11}}p_2^{t_{21}}\cdots p_r^{t_{r1}}).\]
\end{corollary}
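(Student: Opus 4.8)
The plan is to obtain this bound as an immediate consequence of Theorem~\ref{vc of enhced pwr raph for grnrral abelian grp}, exploiting the fact that the power graph $\mathcal{P}(G)$ is a spanning subgraph of the enhanced power graph $\mathcal{G}_e(G)$: both have vertex set $G$, and every edge of $\mathcal{P}(G)$ is an edge of $\mathcal{G}_e(G)$. The guiding principle is the monotonicity of vertex cuts under passing to a spanning subgraph --- any set of vertices whose removal disconnects $\mathcal{G}_e(G)$ also disconnects the sparser graph $\mathcal{P}(G)$ --- which gives $\kappa(\mathcal{P}(G)) \le \kappa(\mathcal{G}_e(G))$ whenever both graphs are non-complete.

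Concretely, I would reuse the separating set constructed in the proof of Theorem~\ref{vc of enhced pwr raph for grnrral abelian grp}. There one fixes the maximal cyclic subgroup $H = \langle a \rangle$ with $a = (a_{11}, 0, \ldots, 0, a_{21}, 0, \ldots, a_{r1}, 0, \ldots, 0)$ and $\text{o}(a_{i1}) = p_i^{t_{i1}}$, so that $|H| = p_1^{t_{11}} p_2^{t_{21}} \cdots p_r^{t_{r1}}$, and takes $S = H \setminus \mathcal{G}en(a)$, that is, the identity together with the non-generator, non-identity elements of $H$; hence $|S| = p_1^{t_{11}} p_2^{t_{21}} \cdots p_r^{t_{r1}} - \phi(p_1^{t_{11}} p_2^{t_{21}} \cdots p_r^{t_{r1}})$. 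The content established there is that in $\mathcal{G}_e(G)$ no element of $\mathcal{G}en(a)$ is adjacent to any $b \in G \setminus H$, so removing $S$ isolates the nonempty clique $\mathcal{G}en(a)$ from the nonempty remainder $G \setminus H$ (nonempty because $G$ is non-cyclic, whence $H \neq G$).

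I would then observe that this same set $S$ separates $\mathcal{P}(G)$. Since $\mathcal{P}(G)$ carries no edges beyond those of $\mathcal{G}_e(G)$, the induced subgraph $\mathcal{P}(G) - S$ is a spanning subgraph of $\mathcal{G}_e(G) - S$; as the latter already has no edge joining $\mathcal{G}en(a)$ to $G \setminus H$, neither does the former, so $\mathcal{P}(G) - S$ is disconnected. Equivalently, one can argue directly from the maximality of $H$ that no generator of $H$ can be a power of, nor have as a power, any element outside $H$, so $\mathcal{G}en(a)$ stays isolated in the power graph as well. Both $\mathcal{P}(G)$ and $\mathcal{G}_e(G)$ are non-complete here by Lemma~\ref{enhd coplte iff cyclic}, since $G$ is non-cyclic, so vertex connectivity is governed by genuine separating sets, and we conclude $\kappa(\mathcal{P}(G)) \le |S| = p_1^{t_{11}} p_2^{t_{21}} \cdots p_r^{t_{r1}} - \phi(p_1^{t_{11}} p_2^{t_{21}} \cdots p_r^{t_{r1}})$. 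I do not anticipate any serious obstacle: the argument is essentially immediate, the only point requiring care being the standard but essential fact that a vertex cut of a graph remains a vertex cut of any of its spanning subgraphs.
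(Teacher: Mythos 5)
Your proposal is correct and is exactly the paper's (implicit) argument: the paper derives this corollary directly from Theorem~\ref{vc of enhced pwr raph for grnrral abelian grp}, the point being precisely that the separating set $H \setminus \mathcal{G}en(a)$ constructed there also disconnects $\mathcal{P}(G)$, since $\mathcal{P}(G)$ is a spanning subgraph of $\mathcal{G}_e(G)$. Your additional care about non-completeness (via Lemma~\ref{enhd coplte iff cyclic}) and the nonemptiness of $G \setminus H$ only makes explicit what the paper leaves unsaid.
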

\begin{proof}[Proof of Theorem \ref{vc of G1 cross cyclic}]
Proof of this theorem follows from Theorems \ref{thm: all dom of G times Zn} and \ref{thm:**disconnected}. 
\end{proof}
\subsection{Number of Components of $\mathcal{G}_{e}^{**}(G)$ when it is disconnected} 
So far, we have characterized the abelian groups for which the proper enhanced power graph is disconnected. In this context, the 
natural question that comes to our mind is the number
of connected components of the subgraph 
$\mathcal{G}_e^{**}(G).$  By Theorem \ref{thm:**disconnected}, the proper enhanced power graph $\mathcal{G}_e^{**}(G)$ for a finite abelian group $G$ is disconnected when
$G$ is either a non-cyclic $p$-group or $G \cong G_1 \times \mathbb{Z}_n$ where $G_1$ is an $p$-group and $\text{gcd}(p, n)=1.$ Here, we explicitly count the number of components for those $G$. 
\begin{theorem}
	\label{thm:components_of_G_abelian}
	Let G be a finite abelian $p$-group. Suppose
	$$G = Z_{p^{t_1}}
	\times  Z_{p^{t_2}}
	\times  \cdots \times 
	Z_{p^{t_r}}.$$ where $r \geq 2$ and $t_1 \leq t_2 \leq \cdots \leq t_r.$
	Then, the number of components of $\mathcal{G}_{e}^{**} (G )$  
	is $\frac{p^r-1}{p-1}.$ 
\end{theorem}
\begin{proof}
	It is easy to show that there are $p^{r}-1$ elements of order $p$.
	For any element $a$ of order $p$,  the $p-1$ non-zero 
	scalar multiples of $a$ must be in the same component.
	Moreover, by Lemma \ref{lema: p and p^i orderd path connd abelong< b>}, 
	if any two elements of order $p$ are connected by a path, 
	then one of them must be the multiple of another. Henceforth, 
	there are exactly  $p-1$ members of order $p$ in any component. 
	Thus, the number of connected components of 
	$\mathcal{G}_{e}^* (G )$ is $\frac{p^{r}-1}{p-1}.$ 
\end{proof}
It is quite interesting to note that the number of components of the proper enhanced power graph of a finite abelian non-cyclic $p$-group is independent of the exponent $t_i$'s. In the next result, we prove that this phenomenon is observed also in the case when $G \cong G_1 \times \mathbb{Z}_n$ where $G_1$ is an $p$-group and $\text{gcd}(p, n)=1.$
Let $C(\mathcal{G}_{e}^{**} (G ))$ be the set of connected components of the proper enhanced power graph $\mathcal{G}_{e}^{**} (G).$
\begin{theorem}
	\label{thm:no_of_components}
	Let $G$ be an abelian group such that  $G \cong \mathbb{Z}_{p^{t_{1}}}\times \mathbb{Z}_{p^{t_{2}}}\times \cdots \times \mathbb{Z}_{p^{t_{r}}} \times \mathbb{Z}_n,$ where $r \geq 2$ and $\text{gcd}(p, n)=1.$ Then, the number of components of  $\mathcal{G}_{e}^{**} (G )$ is $\frac{p^r-1}{p-1}.$ 
\end{theorem}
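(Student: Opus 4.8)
The plan is to set $G = G_1 \times \mathbb{Z}_n$ with $G_1 = \mathbb{Z}_{p^{t_1}} \times \cdots \times \mathbb{Z}_{p^{t_r}}$ a non-cyclic abelian $p$-group, and to show that the connected components of $\mathcal{G}_e^{**}(G)$ are in bijection with the subgroups of order $p$ of $G_1$. Since the elements of order dividing $p$ in $G_1$ form an elementary abelian group of rank $r$, there are $p^r-1$ elements of order $p$, grouped into subgroups of order $p$ each carrying $p-1$ generators; hence the number of order-$p$ subgroups is $\frac{p^r-1}{p-1}$, which is the desired count and matches Theorem \ref{thm:components_of_G_abelian}.

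First I would record, using Theorem \ref{thm: all dom of G times Zn}, that the deleted dominating vertices are exactly the elements $(e,x)$ with $x \in \mathbb{Z}_n$; hence every vertex of $\mathcal{G}_e^{**}(G)$ has the form $(g,y)$ with $g \neq e$. For such a vertex, $\langle g\rangle$ is a nontrivial cyclic $p$-subgroup of $G_1$ and therefore contains a unique subgroup of order $p$, which I denote $P(g)$. The heart of the argument is to prove that $P(g)$ is constant on each connected component. For invariance along an edge: if $(g,y) \sim (g',y')$, there is a cyclic subgroup of $G$ containing both, so projecting onto the first coordinate places $g$ and $g'$ inside a single cyclic $p$-subgroup $C \leq G_1$; since a cyclic $p$-group has a unique minimal subgroup, contained in every nontrivial subgroup, we get $P(g) = P(g') = $ the order-$p$ subgroup of $C$. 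Iterating along a path shows $P$ is a component invariant, and this is precisely the step that handles paths wandering through vertices with nonzero $\mathbb{Z}_n$-coordinate.

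For the converse I would show that any two vertices with the same value of $P$ lie in one component by exhibiting an explicit path to a canonical representative. Given $(g,y)$, one checks that $(g,0) \in \langle (g,y)\rangle$: because $\text{gcd}(\text{o}(g), \text{o}(y)) = 1$, a suitable power of $(g,y)$ equals $(g,0)$, so $(g,y) \sim (g,0)$. If $w$ generates $P(g)$, then $(w,0) \in \langle (g,0)\rangle$, so $(g,0) \sim (w,0)$. Thus every vertex is joined to the order-$p$ vertex $(w,0)$ determined by $P(g)$, and two vertices with the same $P$-value reach a common representative $(w,0)$. Together with the invariance direction, this yields a bijection between components of $\mathcal{G}_e^{**}(G)$ and order-$p$ subgroups of $G_1$, completing the count.

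The main obstacle I anticipate is the invariance (forward) direction: one must argue carefully that adjacency in the full graph $\mathcal{G}_e^{**}(G)$, not merely within the slice $y=0$, forces the first coordinates into a single cyclic $p$-subgroup and preserves its minimal subgroup, so that $P$ survives arbitrary paths. Once this is established, the explicit path construction and the elementary enumeration $\frac{p^r-1}{p-1}$ of order-$p$ subgroups are routine. The whole argument mirrors the reasoning behind Lemma \ref{lema: p and p^i orderd path connd abelong< b>} and Theorem \ref{thm:components_of_G_abelian}, now carried out in the presence of the coprime factor $\mathbb{Z}_n$, which explains why the answer is again independent of the exponents $t_i$.
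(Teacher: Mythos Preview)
Your proposal is correct and follows essentially the same approach as the paper: both arguments identify the components of $\mathcal{G}_e^{**}(G)$ with the order-$p$ subgroups of $G_1$ by showing that the unique minimal $p$-subgroup below the first coordinate is invariant along paths (via projection to a cyclic $p$-group) and that every vertex $(g,y)$ is path-connected to its canonical order-$p$ representative $(w,0)$. The only cosmetic difference is that the paper bootstraps from Theorem~\ref{thm:components_of_G_abelian} by lifting the components $C_i$ of $\mathcal{G}_e^{**}(G_1)$ to $C_i\times\mathbb{Z}_n$, whereas you phrase the same invariant directly as the map $P(g)$.
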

\begin{proof}
	Let, $G_1= \mathbb{Z}_{p^{t_{1}}}\times \mathbb{Z}_{p^{t_{2}}}\times \cdots \times \mathbb{Z}_{p^{t_{r}}}.$ By Theorem \ref{thm:components_of_G_abelian}, the number of connected components of  $\mathcal{G}_{e}^{**} (G_1)$ is $\frac{p^r-1}{p-1}.$ Let, $C_1, C_2, \cdots, C_{\frac{p^r-1}{p-1}}$ be the components of 
	$\mathcal{G}_{e}^{**} (G_1) \mapsto C(\mathcal{G}_{e}^{**} (G_1 \times \mathbb{Z}_n))$ by
	$$f(C_i)= C_i \times  \mathbb{Z}_n .$$
	
	At first, we show that there is no path in between $C_i \times  \mathbb{Z}_n$ and 
	$C_j \times  \mathbb{Z}_n$ for $1 \leq i < j \leq \frac{p^r-1}{p-1}.$ Let, there exists 
	an path between $(a_1, b_1)$ and $(a_2, b_2)$ where $a_1 \in C_i$, $a_2 \in C_j$ and $b_1, b_2 \in \mathbb{Z}_n.$ 
	If possible, let $(a_1, b_1) \sim (c_1, d_1) \sim (c_2, d_2) \sim \cdots \sim (c_{m-1}, d_{m-1}) \sim (a_2, b_2)$ in $\mathcal{G}_e^{**}(G)$ where $c_1, c_2,\dots, c_{m-1} \in G_1$ and $d_1, d_2, \dots,d_{m-1} \in \mathbb{Z}_n$. Then $c_1, c_2, \cdots, c_{m-1}$ must be non-zero elements of $G_1$. This proves that $a_1$ and $a_2$ are connected by a path in $\mathcal{G}_e^{*}(G_1)$ which contradicts the fact that $C_i$ and $C_j$ are distinct connected components of $\mathcal{G}_{e}^{**} (G_1).$ Therefore, the number of components of $\mathcal{G}_{e}^{**} (G_1)$ is at least 
	$\frac{p^r-1}{p-1}.$ 
	
	Moreover, it is clear that the number of elements of order $p$ in $G$ is $\frac{p^r-1}{p-1}.$ Any element of order $>p$ is adjacent to an element of order $p$. Therefore, the number of components of $\mathcal{G}_{e}^{**} (G_1)$ should be exactly equal to 
	$\frac{p^r-1}{p-1}.$ The proof is complete. 
\end{proof}
\section{Vertex Connectivity of Some Non-abelian groups}
\label{sec:5}
In this Section, we discuss the vertex connectivity of some interesting classes of non-abelian groups. We start with the dihedral groups. We need the structures of these groups to determine the vertex connectivity. 
For $n \geq 2$, the dihedral group of order $2n$ is defined by the following presentation:
\[ D_{2n}= \langle r, s : r^n=s^2=e, rs=sr^{-1} \rangle.\]
We also consider the generalized quarternion groups $Q_{2^n}.$ Let $x = \overline{(1, 0)}$ and $y = \overline{(0, 1)}.$ Then $Q_{2^n} = \langle x, y\rangle,$ where
\begin{enumerate}
\item
$x$ has order $2^{n-1}$ and $y$ has order $4,$
\item
every element of $Q_{2^n}$ can be written in the form $x^a$ or $x^ay$ for some $a\in \mathbb{Z},$
\item
$x^{2^{n-2}}=y^2,$
\item
for each $g\in Q^{2^n}$ such that $g\in \langle x \rangle,$ such that $gxg^{-1}=x^{-1}.$
\end{enumerate}
For more information about $D_{2n},$ and $Q_{2^n}$ see \cite{generalized-quaternion, algebradummitfoote, scott-group}. 
\begin{theorem}
\label{thm:components_of_G_dihedral}
Let $G$ be the dihedral group of order $2n.$
Then $\kappa(\mathcal{G}_e(G))$ is $1.$
Moreover, the number of components of $\mathcal{G}_e^{**}(G)$  is
$n+1$. 
\end{theorem}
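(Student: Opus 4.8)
The plan is to read off the entire structure of $\mathcal{G}_e(D_{2n})$ from the cyclic subgroups of $D_{2n}$, after which both assertions become immediate. First I would record that $D_{2n}$ consists of the $n$ rotations $e, r, r^2, \ldots, r^{n-1}$, which constitute the cyclic subgroup $\langle r\rangle$ of order $n$, together with the $n$ reflections $r^i s$ for $0 \le i \le n-1$, each of which satisfies $(r^i s)^2 = e$ and hence has order $2$. The key preliminary step is to list all cyclic subgroups: a cyclic subgroup generated by a rotation is a subgroup of $\langle r\rangle$ and contains only rotations, while a cyclic subgroup generated by a reflection is $\langle r^i s\rangle = \{e, r^i s\}$, of order $2$. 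Consequently the only cyclic subgroups of $D_{2n}$ are the subgroups of $\langle r\rangle$ and the $n$ two-element subgroups $\langle r^i s\rangle$.

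With this in hand, the adjacencies in $\mathcal{G}_e(D_{2n})$ are forced. Since all rotations lie in the common cyclic subgroup $\langle r\rangle$, the set $\{e, r, \ldots, r^{n-1}\}$ forms a clique. The crux of the argument, and the step I expect to require the most care, is to show that each reflection $r^i s$ is adjacent to $e$ and to nothing else. Indeed, if $r^i s$ were joined to some vertex $v \ne e$, then by definition $r^i s$ and $v$ would lie in a common cyclic subgroup; but every cyclic subgroup containing the reflection $r^i s$ equals $\{e, r^i s\}$ by the previous paragraph, forcing $v = r^i s$. In particular no rotation $r^k$ with $k \ne 0$ is adjacent to any reflection, and two distinct reflections are never adjacent (this last point also follows directly from Lemma \ref{same ord elmnt but disnt cyclc sbg , not adjacnt i enhacd pwr grp}, since distinct reflections have equal order $2$ but generate distinct cyclic subgroups). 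Thus each reflection is a pendant vertex attached only to $e$.

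For the connectivity statement, I observe that $e$ is a dominating vertex, so $\mathcal{G}_e(D_{2n})$ is connected, giving $\kappa(\mathcal{G}_e(D_{2n})) \ge 1$; and since the graph is not complete (two distinct reflections are non-adjacent), this is the relevant regime for the usual definition. Deleting the single vertex $e$ leaves the $n$ reflections isolated, hence disconnects the graph, so $\kappa(\mathcal{G}_e(D_{2n})) \le 1$. Combining the two bounds yields $\kappa(\mathcal{G}_e(D_{2n})) = 1$.

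Finally, to count the components of $\mathcal{G}_e^{**}(D_{2n})$ I would first verify that $e$ is the only dominating vertex: a reflection is adjacent to nothing beyond $e$, and a non-identity rotation fails to be adjacent to any reflection, so neither can dominate. Hence $\mathcal{G}_e^{**}(D_{2n}) = \mathcal{G}_e^{*}(D_{2n})$ is obtained by deleting $e$ alone. After this deletion the $n-1$ non-identity rotations still form a single clique, hence one connected component, while each of the $n$ reflections becomes an isolated vertex, contributing $n$ further components. The total is $1 + n = n+1$, as claimed. The only remaining bookkeeping is the boundary case $n=2$, where the rotation part degenerates to the single vertex $r$ and the count $1 + 2 = 3$ still holds.
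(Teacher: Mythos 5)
Your proof is correct and takes essentially the same route as the paper: both arguments rest on the observation that every cyclic subgroup containing a reflection equals $\{e, r^i s\}$, so each reflection is adjacent only to $e$, while the rotations form a clique inside $\langle r\rangle$, giving the cut vertex $e$ and the $n+1$ components. Your write-up merely makes explicit some points the paper leaves implicit (the classification of cyclic subgroups, the fact that $e$ is the unique dominating vertex so $\mathcal{G}_e^{**}(G)=\mathcal{G}_e^{*}(G)$, and the boundary case $n=2$).
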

%\begin{proof}
%	We know that $D_{2n}$ has a cyclic subgroup $C_n$ of order $n$ and for any $x \in D_{2n}\setminus C_n,$ the order of $x$ is $2.$ Now we show that there is no edge between the vertices in $C_n$ and the vertices from $D_{2n}\setminus C_n$ in $\mathcal{G}^{*}_e(D_{2n}).$ First suppose that $\text{gcd}(n, 2)=1.$ Let there exist an element $a$ of order $2$ adjacent to an element $b$ in $C_n.$ Clearly $\text{gcd}(\text{o}(a), \text{o}(b))=1.$ Since $a\sim b,$ then there exist $z\in G$ such that $a, b\in \langle z\rangle \Rightarrow ab=ba\Rightarrow \text{o}(ab)=2n\Rightarrow D_{2n}$ is cyclic group, a contradiction. Let $\text{gcd}(n, 2)>1.$ Suppose $a\sim b.$ Then there exists $z\in G$ such that $a, b \in \langle z\rangle.$ Now $\text{o}(a)=2, \text{o}(b)=m \text{ (say)}.$ Let $\text{gcd}(m, 2)=1,$ then $m>2$ and $\text{o}(ab)=2m.$ So $|\langle ab \rangle |=2m>4$ and this implies that $o(z)>4\Rightarrow z\in C_n.$ So, $a \in C_n,$ and this  opposes the fact that $a\in D_{2n}\setminus C_n.$  If $\text{gcd}(m, 2)\neq 1$ then $m\geq 2$ and $a\in b$ implies that $a\in \langle b \rangle \Rightarrow a \in C_n.$ Hence the theorem.	
%\end{proof}
%
%\blue{It is better if the above proof somehow can be modified so that it also gives the number of components at the same time. Then we need not give the next proof.}
\begin{proof}
Consider the following $n+1$ sets:
\begin{eqnarray}
\label{eqn:constructionofVi}
S_1 = \{ rs  \}, S_2= \{ r^2s \}, \cdots, S_{n-1}= \{ r^{n-1}s \}, S_n  = \{  s  \},  S_{n+1}=\{ r,r^2,\dots,r^{n-1}\}.\nonumber 
\end{eqnarray}
We observe that $G\setminus \{e\} = \displaystyle \cup_{i=1}^{n+1} S_i $ 
and for  $1 \leq i <  j \leq n$. Moreover, the power of any element of $S_i$ must be in $S_i$ itself in $G\setminus \{e\}.$  
Therefore, there can be no edge between $S_i$ and $S_j$ for distinct $i,j$. 
This completes the proof. 
\end{proof}
\begin{theorem}\label{connectivty generalized qtrn}
For $n\geq 3,$ let $Q_{2^n}$ be the generalized quaternion group. Then the vertex connectivity of $\mathcal{G}_e(Q_{2^n})$ is $2.$ Moreover the number of components of $\mathcal{G}^{**}_e(Q_{2^n})$ is $2^{n-2}+1.$ 	
\end{theorem}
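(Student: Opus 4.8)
The plan is to first pin down the two dominating vertices of $\mathcal{G}_e(Q_{2^n})$, then use them to sandwich the vertex connectivity between $2$ and $2$, and finally read off the number of components of $\mathcal{G}_e^{**}(Q_{2^n})$ from the cyclic subgroup structure. I would begin by recording the structural facts I need, all derived from properties (1)--(4). By (3), $z := x^{2^{n-2}} = y^2$ has order $2$; and by (4), for every $a$ one computes $(x^ay)^2 = x^a(yx^ay^{-1})y^2 = x^a x^{-a} y^2 = z$, so every element of the form $x^ay$ has order $4$. Hence $z$ is the unique involution of $Q_{2^n}$, and therefore $z$ lies in every non-trivial cyclic subgroup, since each such subgroup has even order and so contains the unique involution. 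Consequently $z$ is adjacent to every other vertex, i.e. $z \in \Dom(\mathcal{G}_e(Q_{2^n}))$, and the identity $e$ is of course also dominating. A short check (for instance $x \not\sim y$ and $y \not\sim xy$, each pair lying in no common cyclic subgroup for $n\geq 3$) shows $\Dom(\mathcal{G}_e(Q_{2^n})) = \{e, z\}$, so that $\mathcal{G}_e^{**}(Q_{2^n}) = \mathcal{G}_e(Q_{2^n}) - \{e, z\}$.

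For the vertex connectivity, note first that since $Q_{2^n}$ is non-cyclic, $\mathcal{G}_e(Q_{2^n})$ is not complete by Lemma \ref{enhd coplte iff cyclic}, so $\kappa$ is a genuine finite parameter. For the lower bound $\kappa \geq 2$, I would argue that deleting any single vertex leaves at least one of the two \emph{distinct} dominating vertices $e, z$ in place, and a graph possessing a dominating vertex is connected; hence no single vertex is a cut vertex. For the upper bound $\kappa \leq 2$, it suffices to exhibit $\mathcal{G}_e^{**}(Q_{2^n})$ (the deletion of $\{e,z\}$) as disconnected, which follows from the component count below. Together these give $\kappa(\mathcal{G}_e(Q_{2^n})) = 2$.

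For the number of components I would use that the cyclic subgroups of $Q_{2^n}$ are exactly $\langle x\rangle$ (of order $2^{n-1}$) together with the $2^{n-2}$ subgroups $\langle x^a y\rangle$ of order $4$; here the identity $\langle x^a y\rangle = \{e,\, x^a y,\, z,\, x^{a+2^{n-2}}y\}$ groups the $2^{n-1}$ elements $x^a y$ into $2^{n-2}$ subgroups, each maximal cyclic and the unique cyclic subgroup containing its two order-$4$ elements. Deleting $e$ and $z$ then leaves the clique $\langle x\rangle\setminus\{e,z\}$ (non-empty as $2^{n-1}-2\geq 2$ for $n\geq 3$) as one component, while each $\langle x^a y\rangle$ contributes the isolated edge $\{x^a y,\ x^{a+2^{n-2}}y\}$. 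Since distinct subgroups among $\langle x\rangle$ and the $\langle x^a y\rangle$ meet only in $\{e,z\}$, there are no edges between distinct pieces after deletion, yielding $1 + 2^{n-2} = 2^{n-2}+1$ components.

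The main obstacle is the structural bookkeeping: verifying that $(x^ay)^2 = z$ for every $a$ (so that each element outside $\langle x\rangle$ has order exactly $4$ and $z$ is the unique involution), and that $\langle x^a y\rangle$ is the unique and maximal cyclic subgroup through $x^a y$. Once this is in hand, both the connectivity value and the component count fall out immediately, and the observation that a surviving dominating vertex forces connectivity disposes of the lower bound for $\kappa$ cleanly.
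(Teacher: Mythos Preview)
Your proof is correct and follows essentially the same route as the paper's: identify the unique involution $z$ as a second dominating vertex (the paper invokes Lemma~\ref{dom of gen Q_2^n in enhced} for this, while you derive it directly from $(x^ay)^2=z$), obtain $\kappa\geq 2$ from the two dominating vertices, and then read off both $\kappa\leq 2$ and the component count from the fact that the maximal cyclic subgroups are $\langle x\rangle$ together with the $2^{n-2}$ order-$4$ subgroups $\langle x^ay\rangle$, any two meeting only in $\{e,z\}$. Your version is somewhat more explicit in the bookkeeping (e.g.\ the identity $\langle x^ay\rangle=\{e,\,x^ay,\,z,\,x^{a+2^{n-2}}y\}$), and your later maximal-cyclic analysis in fact justifies the claim $\Dom(\mathcal{G}_e(Q_{2^n}))=\{e,z\}$ more fully than the ``short check'' you sketch earlier, but the strategy is the same as the paper's.
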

\begin{proof}
$Q_{2^n}$ is generalized quaternion group, so $Q_{2^n}$ is a $2$-group and it has a unique minimal subgroup of order $2.$ Let $g\in Q_{2^n}$ such that $\text{o}(g)=2.$ Then by Lemma \ref{dom of gen Q_2^n in enhced}, $g$ and $e$ are adjacent to all other vertices in $\mathcal{G}_e(Q_{2^n}).$ For this reason to disconnect the graph we have to delete the vertices $e, g.$ Consequently, $\kappa(\mathcal{G}_e(Q_{2^n}))\geq 2.$
Now we show that after removing the vertices $e$ and $g$ from $V(\mathcal{G}_e(Q_{2^n})),$ the graph $\mathcal{G}^{**}_e(Q_{2^n})$ will be disconnected. Let $x\in Q_{2^n}$ such that $\text{o}(x)=2^{n-1}$ and $H=\langle x\rangle.$ We will prove that there is no edge between the vertices in $H$ and $Q_{2^n}\setminus H.$ If for any $y\in Q_{2^n}\setminus H$ is adjacent to a vertex of $H,$ then $y$ should belong to $H,$ (since $H$ is the only subgroup of order $2^{n-1}$ and there is no other subgroup of order $>2^{n-1}$) a contradiction. So the graph is disconnected. Moreover it is clear that there is no edge between the vertices in $\mathcal{G}en(y)$ and $\mathcal{G}en(z),$ where $y, z\in Q_{2^n}\setminus H$ such that $\text{o}(y)=4=\text{o}(z)$ and $\langle y\rangle\neq\langle z\rangle.$ Hence the number of components in $\mathcal{G}^{**}_e(Q_{2^n})$ is $2^{n-2}+1$ (as the number of $4$-ordered element in $Q_{2^n}\setminus H$ is $2^{n-1} \text{ and } \phi(4)=2).$ 
\end{proof}
\begin{corollary}\label{connectivty **generalized qtrn of enhed power graph}
Let $Q_{2^n}$ be the generalized quaternion group. Then the enhanced power graph $\mathcal{G}^{*}_e(Q_{2^n})$ is connected but the proper enhanced power graph $\mathcal{G}^{**}_e(Q_{2^n})$ is disconnected.
\end{corollary}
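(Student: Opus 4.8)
The plan is to derive both assertions directly from the immediately preceding Theorem \ref{connectivty generalized qtrn}, which already packages all the structural information we need; essentially no new computation is required, so this is a bookkeeping corollary.

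First I would establish that $\mathcal{G}^{*}_e(Q_{2^n})$ is connected. By Theorem \ref{connectivty generalized qtrn} we have $\kappa(\mathcal{G}_e(Q_{2^n}))=2$. Since a graph of vertex connectivity at least $2$ cannot be disconnected by the removal of a single vertex, deleting only the identity $e$ leaves $\mathcal{G}_e(Q_{2^n})$ connected; by Definition \ref{defn: proper enhacd pwr graph} the graph obtained by deleting only $e$ is precisely $\mathcal{G}^{*}_e(Q_{2^n})$, so this graph is connected. Alternatively, and more concretely, Lemma \ref{dom of gen Q_2^n in enhced} guarantees that $\mathcal{G}_e(Q_{2^n})$ is dominatable, so the unique element $g$ of order $2$ is adjacent to every other vertex. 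This domination property survives the removal of $e$, hence $g$ remains a dominating vertex of $\mathcal{G}^{*}_e(Q_{2^n})$, which forces that graph to be connected.

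Next I would show that $\mathcal{G}^{**}_e(Q_{2^n})$ is disconnected. The key point is to identify which extra vertices are removed in passing from $\mathcal{G}^{*}_e$ to $\mathcal{G}^{**}_e$: the dominating vertices of $\mathcal{G}_e(Q_{2^n})$ are exactly $e$ and the unique order-$2$ element $g$, so the proper enhanced power graph deletes precisely $g$ in addition to $e$. Theorem \ref{connectivty generalized qtrn} already records that the number of components of $\mathcal{G}^{**}_e(Q_{2^n})$ is $2^{n-2}+1$, and for $n\geq 3$ this quantity is at least $3$. In particular it exceeds $1$, so $\mathcal{G}^{**}_e(Q_{2^n})$ is disconnected, completing the proof.

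I do not anticipate any genuine obstacle here. The only points requiring a moment's care are the elementary observation that deleting one vertex cannot disconnect a $2$-connected graph, and the identification of $e$ and $g$ as exactly the dominating vertices, so that $\mathcal{G}^{*}_e$ and $\mathcal{G}^{**}_e$ differ by the single vertex $g$; everything else is inherited verbatim from Theorem \ref{connectivty generalized qtrn}.
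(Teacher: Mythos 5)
Your proposal is correct and follows essentially the same route as the paper: the corollary is stated there as an immediate consequence of Theorem \ref{connectivty generalized qtrn}, whose proof already shows that the order-$2$ element $g$ (together with $e$) dominates $\mathcal{G}_e(Q_{2^n})$ — so deleting $e$ alone cannot disconnect the graph — and that $\mathcal{G}^{**}_e(Q_{2^n})$ splits into $2^{n-2}+1>1$ components. Both of your arguments for connectivity (via $\kappa=2$ and via the surviving dominating vertex $g$) and your appeal to the component count for disconnectedness match the paper's intent exactly.
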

We next consider the family of symmetric groups $\mathfrak{S}_n.$ \cite{algebradummitfoote} is a good reference for this. Recall from Section \ref{sec:intro} that $\mathcal{P}(G)$ denotes
the power graph of $G.$ Let,  $\mathcal{P}^*(G)$ denote
the power graph of $G$ after deleting the identity. Doostabadi et.al proved the following theorem on the vertex connectivity of power graphs \cite{doostabadi}.
\begin{theorem}
\label{thm:doostabadi_power_graph}
Let $G = \mathfrak{S}_n$ be a symmetric group with $n \geq 3.$ Then 
\begin{enumerate}
\item If $n \geq 3$ and neither $n$ nor $n-1$ is a prime, then $\mathcal{P}^*(G)$ is connected.
\item If $n$ is such that either $n$ or $n-1$ is a prime, then $\mathcal{P}^*(G)$ is disconnected.
\end{enumerate}
\end{theorem}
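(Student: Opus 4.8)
The plan is to analyze the components of $\mathcal{P}^*(\mathfrak{S}_n)$ through the cyclic-subgroup structure, using two elementary facts: in $\mathcal{P}^*(G)$ every element is adjacent to all of its powers, and if $\alpha,\beta$ are disjoint cycles of coprime lengths then $\alpha \sim \alpha\beta \sim \beta$, since each of $\alpha,\beta$ is a power of $\alpha\beta$. This second fact is the main bridging tool: it lets us move between cyclic subgroups whenever there are spare points on which to graft a coprime cycle.

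For the disconnected direction, suppose $q \in \{n-1,n\}$ is prime. Since $q > n/2$, the only cycle type of an element whose order is divisible by $q$ is a single $q$-cycle, because any partition of an integer $\le n$ containing a part divisible by $q$ must consist of that one part. I would then show that a $q$-cycle $c$ is isolated from the rest: $c$ is a maximal cyclic element, for if $c = h^k$ then $q \mid \mathrm{o}(h)$, forcing $h$ to contain a $q$-cycle on at most one point outside $\mathrm{supp}(c)$, whence $h$ is itself a $q$-cycle with $\langle h\rangle = \langle c\rangle$. Consequently every neighbour of $c$ either is a power of $c$ or has $c$ as a power, and in both cases lies in $\langle c\rangle$. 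Thus $\langle c\rangle \setminus \{e\}$ is a full connected component, and since transpositions lie outside it the graph is disconnected.

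For the connected direction assume both $n$ and $n-1$ are composite. As every vertex is adjacent to a power of prime order, it suffices to connect all prime-order elements to one fixed hub, say the transposition $(1\,2)$ (together with the $3$-cycle $(1\,2\,3)$, reachable via $(1\,2)(3\,4\,5)$ since $n \ge 9$ in this regime). A prime-order element is a product of $k$ disjoint $p$-cycles on $kp$ points. If it has at least two free points (for odd $p$), or at least three (for $p=2$), I graft a disjoint transposition, respectively a $3$-cycle, of coprime length: the resulting permutation $\rho$ has the given element among its powers and also has a transposition, respectively a $3$-cycle, as a power ($\rho^p$ or $\rho^2$), which routes into the hub. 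Because $n$ and $n-1$ are composite, any single $p$-cycle automatically satisfies $p \le n-2$ and so has the required free points; thus all single cycles reach the hub.

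The remaining and genuinely delicate case is that of prime-order elements occupying all or all-but-one of the points, in particular the fixed-point-free ones such as the involution $(1\,2)(3\,4)\cdots$ or a product of $p$-cycles filling $[n]$. Here the idea is to build a bridging permutation $\rho$ of mixed cycle lengths having the target element among its powers: once $\rho$ contains two cycles of unequal lengths $a \ne b$, the power $\rho^{a}$ fixes the points of the $a$-cycle while still moving others, so $\rho^{a}$ acquires fixed points and can be routed to the hub by the previous paragraph. Producing such a $\rho$ is exactly where the compositeness of $n$ and $n-1$ is spent, since it guarantees a splitting of the support into blocks of distinct sizes (for example, when $n=2p$ one replaces the fixed-point-free involution arising from an $n$-cycle by a $(6,2,2)$-type element whose square is a pair of $3$-cycles). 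I expect this fixed-point-free case, and the borderline subcase $n = 2p$, to be the main obstacle, requiring a careful enumeration of admissible cycle types; everything else follows from the bridging lemma as soon as enough free points are available.
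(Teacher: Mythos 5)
This theorem is never proved in the paper: it is imported verbatim as a known result of Doostabadi et al.\ \cite{doostabadi} and used only as an input to Theorem \ref{thm:vertex-connectivity_of_sym_groups}, so there is no internal argument to compare yours against; what you wrote is a standalone proof attempt, and it is essentially sound. Your disconnected half is complete and correct: for a prime $q\in\{n-1,n\}$ we have $q>n/2$, so any element of order divisible by $q$ must be a single $q$-cycle, a $q$-cycle can be a power only of another $q$-cycle generating the same cyclic group, hence every neighbour of $c$ lies in $\langle c\rangle$, the set $\langle c\rangle\setminus\{e\}$ is a full component, and transpositions lie outside it. Your connected half is the right strategy but is left as a sketch; the deferred enumeration does go through, and for the record: for $k\ge 3$ disjoint $p$-cycles with at most one fixed point, a partition of $k$ into parts coprime to $p$ with two distinct part sizes always exists (take $1+(k-1)$, or $2+(k-2)$ when $p\mid k-1$, or $1+1+2$ in the single exceptional case $p=3,\,k=4$), which yields your mixed-length overgroup; for exactly two $p$-cycles (so $n=2p$ or $2p+1$, necessarily $p\ge 5$ here) the $2p$-cycle routes the element to a fixed-point-free involution, and your $(6,2,\dots,2)$-type element then routes that involution to a pair of $3$-cycles having at least four fixed points; finally, the configurations for which no suitable partition exists (two or three transpositions, or two $3$-cycles, with too few fixed points) would force $n\le 8$, which is excluded because compositeness of both $n$ and $n-1$ gives $n\ge 9$. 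Note also that your bridging lemma is exactly the power-graph content of Lemma \ref{lemma:any_odtwo_commuting_elements_of_prime_order_adjacent}, whose proof in the paper actually shows $a,b\in\langle ab\rangle$, so the one tool you rely on is already available here. One small slip: $(1\,2)(3\,4\,5)$ connects $(1\,2)$ to the $3$-cycle $(3\,4\,5)$, not to $(1\,2\,3)$; joining $(1\,2\,3)$ to the hub needs one more graft through a disjoint transposition, which exists since $n\ge 9$.
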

We prove an analogous result corresponding to the enhanced power graph of $\mathfrak{S}_n.$
For this, we first prove that $\mathcal{G}_e (\mathfrak{S}_n)$ has no dominating vertex. 
\begin{lemma}
\label{lem:Symmetricgrouphasnodominatingvertex}
For $n \geq 3, $ the enhanced power graph 
$\mathcal{G}_e(\mathfrak{S}_n)$ has no dominating vertex other than identity. Therefore, for $n \geq 3$, the graphs  $\mathcal{G}_e^{*}(\mathfrak{S}_n)$ and
$\mathcal{G}_e^{**}(\mathfrak{S}_n)$ coincide.
\end{lemma}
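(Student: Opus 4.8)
The plan is to exploit the fact, noted already in the introduction, that the enhanced power graph is a subgraph of the commuting graph. Concretely, if $u \sim v$ in $\mathcal{G}_e(G)$, then by Definition \ref{defn: enhcdpowr graph} there is an element $w$ with $u, v \in \langle w \rangle$; since $\langle w \rangle$ is abelian, $u$ and $v$ commute. Thus adjacency in $\mathcal{G}_e(\mathfrak{S}_n)$ forces commutativity. This single observation is the engine of the whole argument.

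First I would suppose, for contradiction, that $v \neq e$ is a dominating vertex of $\mathcal{G}_e(\mathfrak{S}_n)$. By definition $v$ is adjacent to every other vertex, so by the paragraph above $v$ commutes with every element of $\mathfrak{S}_n$. Hence $v$ lies in the center $Z(\mathfrak{S}_n)$. It is a standard fact that $Z(\mathfrak{S}_n) = \{e\}$ for all $n \geq 3$, which contradicts $v \neq e$. Therefore $\mathcal{G}_e(\mathfrak{S}_n)$ has no dominating vertex other than the identity.

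Once this is established, the final sentence of the statement is immediate from Definition \ref{defn: proper enhacd pwr graph}: since the only dominating vertex is the identity, deleting all dominating vertices is the same as deleting only the identity, so $\mathcal{G}_e^{**}(\mathfrak{S}_n) = \mathcal{G}_e^{*}(\mathfrak{S}_n)$ for $n \geq 3$.

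I do not expect a serious obstacle here, as the only nontrivial input is the triviality of $Z(\mathfrak{S}_n)$. It is worth remarking why I would avoid routing the proof through Lemma \ref{lem:structurelemmauniqueprimeordersubgroup}: that lemma only gives the \emph{necessary} condition that some prime $p \mid \text{o}(v)$ admits a unique subgroup of order $p$, and $\mathfrak{S}_3$ genuinely has a unique subgroup of order $3$ (namely $A_3$), so that criterion alone does not rule out a dominating vertex and one would still have to check adjacency against transpositions by hand. The central-element argument sidesteps this delicacy entirely and works uniformly for every $n \geq 3$.
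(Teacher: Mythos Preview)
Your proof is correct and takes a genuinely different route from the paper. The paper proceeds via Lemma~\ref{lem:structurelemmauniqueprimeordersubgroup} together with a case split on whether $n$ is composite or prime: for composite $n$ it exhibits two distinct cyclic subgroups of order $p$ (namely $\langle(1,\ldots,p)\rangle$ and $\langle(2,\ldots,p+1)\rangle$) to contradict the unique-subgroup conclusion, while for prime $n=p$ it argues directly that a dominating vertex $a$ would have to lie in $\langle(1,\ldots,p)\rangle$ and then that $(1,2)\in\langle a\rangle$, a contradiction. Your argument is shorter and uniform: adjacency in $\mathcal{G}_e(G)$ forces commutativity, so a dominating vertex is central, and $Z(\mathfrak{S}_n)=\{e\}$ for $n\geq 3$ ends the matter immediately. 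Your remark about $\mathfrak{S}_3$ is exactly the reason the paper needs the second case---when $n$ is prime the candidate prime $p$ may equal $n$, and then $(2,\ldots,p+1)$ does not live in $\mathfrak{S}_n$. The centrality argument sidesteps this and, more broadly, proves the same conclusion for any centerless group in one line, whereas the paper's method requires exhibiting multiple subgroups of each relevant prime order.
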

\begin{proof}
We first prove it when $n$ is composite. Let $a \neq e$ be a dominating vertex. By Lemma \ref{lem:structurelemmauniqueprimeordersubgroup}, there exists a prime $p$ dividing $\text{o}(a) $ such that $G$ has a unique subgroup of order $p.$ But we can take $C_1= \langle (1, 2, \cdots, p) \rangle$ and $C_2=\langle (2, 3, \cdots, p+1) \rangle$ and arrive at a contradiction. We next consider the case when $n$ is prime, say $n=p.$ Let $a \neq e$ be a dominating vertex. Then, $a \sim \langle(1, 2, \cdots, p)\rangle.$ So, they are contained in a cyclic subgroup, say $A$ of $G.$ Now, there cannot be a subgroup of $G$ which properly contains
$\langle(1, 2, \cdots, p)\rangle.$ Hence, $a \in \langle(1, 2, \cdots, p)\rangle$ and consequently $ \langle a \rangle = \langle(1, 2, \cdots, p)\rangle.$
Again, since $a$ is a  dominating vertex, $a \sim (1,2).$ Now, applying the similar argument as above, we see that $(1,2) \in \langle a \rangle,$ which is not possible. The proof is complete.
\end{proof}
\begin{theorem}
\label{thm:vertex-connectivity_of_sym_groups}
For positive integers $n \geq 3$, 
$\kappa (\mathcal{G}_e(\mathfrak{S}_n)) = 1$ if and 
only if either $n$ or $n-1$ is prime. 
\end{theorem}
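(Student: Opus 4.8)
The plan is to connect the vertex connectivity of $\mathcal{G}_e(\mathfrak{S}_n)$ to the connectivity of the deleted power graph $\mathcal{P}^*(\mathfrak{S}_n)$ via the results already established. By Lemma~\ref{lem:Symmetricgrouphasnodominatingvertex}, for $n \geq 3$ the enhanced power graph $\mathcal{G}_e(\mathfrak{S}_n)$ has no dominating vertex other than the identity, so $\mathcal{G}_e^{*}(\mathfrak{S}_n) = \mathcal{G}_e^{**}(\mathfrak{S}_n)$. This is the crucial structural fact: it tells us that removing the single vertex $e$ already removes all dominating vertices, so $\kappa(\mathcal{G}_e(\mathfrak{S}_n)) = 1$ is equivalent to saying that $\mathcal{G}_e^{*}(\mathfrak{S}_n)$ is disconnected, while $\kappa(\mathcal{G}_e(\mathfrak{S}_n)) \geq 2$ is equivalent to $\mathcal{G}_e^{*}(\mathfrak{S}_n)$ being connected.

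The key link is Theorem~\ref{thm:power_connected_iff_enhcdpower_connected}, which asserts that for any group $G$, the deleted power graph $\mathcal{P}^*(G)$ is connected if and only if $\mathcal{G}_e^*(G)$ is connected. Combining this with the equivalence from the previous paragraph, I would argue that $\kappa(\mathcal{G}_e(\mathfrak{S}_n)) = 1$ if and only if $\mathcal{G}_e^*(\mathfrak{S}_n)$ is disconnected, if and only if $\mathcal{P}^*(\mathfrak{S}_n)$ is disconnected. At this point I invoke Theorem~\ref{thm:doostabadi_power_graph} of Doostabadi et.~al., which states precisely that $\mathcal{P}^*(\mathfrak{S}_n)$ is disconnected if and only if either $n$ or $n-1$ is prime. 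Chaining these equivalences yields $\kappa(\mathcal{G}_e(\mathfrak{S}_n)) = 1$ if and only if either $n$ or $n-1$ is prime, which is exactly the claim.

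One point that needs care is verifying that $\kappa$ cannot equal $0$; that is, that $e$ is genuinely a dominating (indeed universal) vertex so that the whole graph $\mathcal{G}_e(\mathfrak{S}_n)$ is connected and removal of a single vertex is meaningful. Since $e$ lies in every cyclic subgroup, it is adjacent to every other vertex, so $\mathcal{G}_e(\mathfrak{S}_n)$ is connected and $\kappa \geq 1$ always holds. Thus the dichotomy is clean: either $\mathcal{G}_e^*(\mathfrak{S}_n)$ is already disconnected (giving $\kappa = 1$) or it is connected (giving $\kappa \geq 2$).

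The main obstacle in this argument is essentially bookkeeping rather than a genuine mathematical difficulty: all the heavy lifting has been outsourced to earlier results. The real content sits inside Lemma~\ref{lem:Symmetricgrouphasnodominatingvertex} (no nontrivial dominating vertices), Theorem~\ref{thm:power_connected_iff_enhcdpower_connected} (the power/enhanced-power connectivity equivalence), and the cited Theorem~\ref{thm:doostabadi_power_graph}. I expect the only place requiring vigilance is ensuring that the ``only if'' and ``if'' directions of each equivalence are threaded together correctly, and that the equality $\mathcal{G}_e^{*} = \mathcal{G}_e^{**}$ is used to justify that deleting $e$ is the same as deleting all dominating vertices, so that the connectivity statement about $\mathcal{G}_e^{*}$ really does control $\kappa(\mathcal{G}_e(\mathfrak{S}_n))$.
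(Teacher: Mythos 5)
Your proposal is correct and follows essentially the same route as the paper: Lemma~\ref{lem:Symmetricgrouphasnodominatingvertex} to identify $\mathcal{G}_e^{*}(\mathfrak{S}_n)$ with $\mathcal{G}_e^{**}(\mathfrak{S}_n)$, Theorem~\ref{thm:power_connected_iff_enhcdpower_connected} to transfer connectivity to $\mathcal{P}^{*}(\mathfrak{S}_n)$, and Theorem~\ref{thm:doostabadi_power_graph} to conclude. The only difference is that you spell out the cut-vertex bookkeeping (that $\kappa \geq 1$ always, and that the only possible cut vertex is the identity since it is universal), which the paper leaves implicit.
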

\begin{proof}
By Lemma \ref{lem:Symmetricgrouphasnodominatingvertex}, we have 
$\mathcal{G}_e^{*}(\mathfrak{S}_n)=\mathcal{G}_e^{**}(\mathfrak{S}_n).$ So, by Theorem \ref{thm:power_connected_iff_enhcdpower_connected}, $\mathcal{G}_e^{**}(\mathfrak{S}_n)$ is connected if and only if $\mathcal{P}^{*}(\mathfrak{S}_n)$ is connected. Now, the proof is complete using Theorem \ref{thm:doostabadi_power_graph}. 
\end{proof}
Let, $\mathcal{A}_n \subseteq \mathfrak{S}_n$ be the alternating group. The family of Alternating groups is an interesting subgroup of the set of even permutations in $\mathfrak{S}_n.$ Doostabadi et.al proved the following theorem on the vertex connectivity of power graphs \cite{doostabadi}.
\begin{theorem}
\label{thm:doostabadi_power_graph_alternating}
Let $G = \mathcal{A}_n$ be the alternating group and $n \geq 4$. Then 
\begin{enumerate}
\item If $n, n-1, n-2, n/2, (n-1)/2, (n-2)/2 $ are not primes, then $\mathcal{P}^*(G)$ is connected.
\item If $n$ is such that any one of $n, n-1, n-2, n/2, (n-1)/2, (n-2)/2 $ is prime, then $\mathcal{P}^*(G)$
is not connected.
\end{enumerate}
\end{theorem}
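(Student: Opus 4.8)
The plan is to analyze the connectivity of $\mathcal{P}^*(\mathcal{A}_n)$ through the cyclic subgroups of $\mathcal{A}_n$, exactly as one does for $\mathfrak{S}_n$ in Theorem \ref{thm:doostabadi_power_graph}, but now subject to the parity constraint that every permutation be even. The starting observation is a general fact about power graphs: if $x$ has prime order $p$, then in $\mathcal{P}^*(G)$ its neighbours are precisely the other generators of $\langle x\rangle$ together with those $y$ for which $\langle x\rangle\subsetneq\langle y\rangle$. Hence the component of $x$ is strictly larger than $\langle x\rangle\setminus\{e\}$ if and only if $\langle x\rangle$ is contained in some strictly larger cyclic subgroup. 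I will call a prime-order element \emph{isolated} when no such enlargement exists; an isolated element contributes a component consisting only of the $p-1$ generators of $\langle x\rangle$. The whole strategy is therefore to decide, for each prime $p$, whether $\mathcal{A}_n$ contains an isolated element of order $p$, and then to show that in the absence of isolated elements a single common hub ties everything together.

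For the disconnection direction (part (2)), I would exhibit an isolated element for each of the six arithmetic conditions. An even element of order $p$ is a product of $m$ disjoint $p$-cycles, occupying $mp$ of the $n$ points; to enlarge its cyclic subgroup one must attach, on the remaining $n-mp$ points, a disjoint cycle structure of some coprime prime length whose addition keeps the permutation even. Since a lone transposition reverses parity, any even coprime attachment needs at least three free points (a $3$-cycle, or a double transposition). The six conditions are exactly the six ways of packing $p$-cycles so that fewer than three points remain: taking $m=1$ gives the thresholds $p\in\{n,\,n-1,\,n-2\}$, while taking $m=2$ (two disjoint $p$-cycles) gives $p\in\{n/2,\,(n-1)/2,\,(n-2)/2\}$. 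In each case the leftover set has size $0,1$ or $2$, too small to carry a coprime even cycle, so the corresponding element is isolated and $\mathcal{P}^*(\mathcal{A}_n)$ is disconnected. The routine part here is checking that the relevant primes are odd (so that the products of $p$-cycles are genuinely even), together with the handful of small cases $n\le 6$ where $p=2$ occurs.

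For the connection direction (part (1)) I would assume none of the six quantities is prime and argue that every prime-order element is non-isolated and, moreover, lands in one fixed component. First, non-isolation: given an even element $g$ that is a product of $m$ disjoint $p$-cycles, the hypothesis rules out exactly the packings with $mp\in\{n-2,\,n-1,\,n\}$ for $m\in\{1,2\}$, so whenever $m\le 2$ there are at least three free points on which to graft a $3$-cycle (or, when $p=3$, a double transposition), producing an element of order $3p$ (or $6$) that properly contains $\langle g\rangle$. For the denser packings ($m\ge 3$, forced only for small $p$) I would instead realise $g$ as a power of an element built from longer cycles — for instance a product of a $2p$-cycle with suitable shorter cycles — which again strictly enlarges $\langle g\rangle$; by conjugacy it suffices to do this for one representative of each cycle type. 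Since the excluded small values of $n$ guarantee $n$ is large, $\mathcal{A}_n$ contains an even element of order $6$ such as $(1\,2\,3)(4\,5)(6\,7)$, whose powers are a $3$-cycle and a double transposition; I would use such elements, together with the mutual adjacency of all $3$-cycle subgroups through common cyclic overgroups, as the hub to which every extended subgroup connects.

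The main obstacle is this connection direction, and within it the bookkeeping for dense packings and small primes: showing that every even element whose order is prime really embeds in a larger cyclic subgroup requires a complete case analysis over the residue $n\bmod p$ and over $p\in\{2,3\}$, and one must verify that all the resulting bridges terminate at the single order-$6$ (equivalently $3$-cycle) hub rather than splitting the graph into several composite-order clusters. Conjugacy-invariance of the property of lying in a larger cyclic subgroup reduces this to finitely many cycle types for each $p$, but assembling these into a proof that the hub is reached uniformly is the delicate step; the disconnection direction, by contrast, is a direct construction.
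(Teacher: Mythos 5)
First, a point of reference: the paper does not prove this statement at all --- it is quoted as Theorem \ref{thm:doostabadi_power_graph_alternating} from the cited work of Doostabadi et al.\ \cite{doostabadi}, so there is no in-paper proof to compare yours against; your attempt can only be judged on its own correctness. Judged that way, there is a genuine gap, and it sits exactly in the disconnection direction that you describe as ``a direct construction.'' Your central claim is that a product of $m$ disjoint $p$-cycles can only lie in a strictly larger cyclic subgroup by \emph{attaching disjoint cycles of coprime order on its fixed points}, so that fewer than three free points forces isolation. This is false for $m=2$: the enlarging element need not contain $g$'s cycles as cycles, because a $2p$-cycle squares to two interleaved $p$-cycles. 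Concretely, take $n=2p+2$ (the case $(n-2)/2=p$) and $g=(1\,2\,\cdots\,p)(p{+}1\,\cdots\,2p)$; then $y=C\cdot(2p{+}1,\,2p{+}2)$, where $C$ is a $2p$-cycle with $C^2=g$, is \emph{even} (product of two odd permutations), has order $2p$, and satisfies $g=y^2$, so $\langle g\rangle \subsetneq \langle y\rangle$. Thus your claimed isolated witness for the sixth condition simply is not isolated; only the cases $n-2p\le 1$ (i.e.\ $n/2$ or $(n-1)/2$ prime) survive, because there the needed odd companion transposition has no room and $C$ alone is odd.

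Moreover, this gap cannot be patched by a cleverer witness, because part (2) of the statement as quoted in this paper is itself false: the genuine Doostabadi--Farrokhi theorem says $\mathcal{P}^*(\mathcal{A}_n)$ is connected if and only if $n\notin\{p,\,p+1,\,p+2,\,2p,\,2p+1\}$ for every prime $p$ --- the condition ``$(n-2)/2$ prime'' does not appear. The smallest clean counterexample is $n=16$, where $(n-2)/2=7$ is prime but none of the other five conditions holds. There $\mathcal{P}^*(\mathcal{A}_{16})$ is in fact connected: every element of order $7$ embeds in a larger cyclic subgroup as above, and one can trace, for instance, the chain (two $7$-cycles) $\sim$ ($14$-cycle)(transposition) $\sim$ (eight transpositions) $\sim$ (four $4$-cycles) $\sim$ ($12$-cycle)($4$-cycle) $\sim$ (four $3$-cycles) $\sim$ (two $6$-cycles) $\sim$ (six transpositions) $\sim$ ($10$-cycle)(transposition) $\sim$ (two $5$-cycles) $\sim$ ($5$-cycle)($5$-cycle)($3$-cycle) $\sim$ ($3$-cycle), after which $3$-cycles hub to every other prime-order class via elements of type $(3,q)$. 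Ironically, your own method --- classifying isolated prime-order elements --- gives the \emph{correct} theorem once the false ``disjoint attachment'' assumption is removed: two $p$-cycles are isolated precisely when $n-2p\le 1$, not $n-2p\le 2$. So your proof fails exactly where the quoted statement fails, and a correct execution of your strategy would prove a different (the true) statement, not this one.
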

We start with showing that $\mathcal{G}_e (\mathcal{A}_n)$ has no dominating vertex.
\begin{lemma}
\label{lem:alternatinggrouphasnodominatingvertex}
For $n \geq 4, $ the enhanced power graph 
$\mathcal{G}_e(\mathcal{A}_n)$ has no dominating vertex other than identity. 
Therefore, for $n \geq 4,$ the graphs  $\mathcal{G}_e^{*}(\mathcal{A}_n)$ and
$\mathcal{G}_e^{**}(\mathcal{A}_n)$ coincide.
\end{lemma}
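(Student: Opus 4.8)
The plan is to argue by contradiction, paralleling the proof of Lemma \ref{lem:Symmetricgrouphasnodominatingvertex}, and to reduce everything to the claim that $\mathcal{A}_n$ has at least two subgroups of each prime order $p \le n$. Suppose $a \neq e$ is a dominating vertex of $\mathcal{G}_e(\mathcal{A}_n)$. By Lemma \ref{lem:structurelemmauniqueprimeordersubgroup}, there is a prime $p \mid \text{o}(a)$ such that $\mathcal{A}_n$ has a \emph{unique} subgroup of order $p$. Since $\text{o}(a)$ divides $|\mathcal{A}_n| = n!/2$, this prime satisfies $p \le n$. Hence it suffices to exhibit, for every prime $p \le n$, two distinct subgroups of order $p$ inside $\mathcal{A}_n$; this contradicts the uniqueness and finishes the proof regardless of which prime the structure lemma supplies.

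I would split into three cases according to $p$. \textbf{Case $p=2$.} Since $n \ge 4$, the double transpositions $(1,2)(3,4)$ and $(1,3)(2,4)$ are even, have order $2$, and generate distinct subgroups. \textbf{Case $p$ odd with $p \le n-1$.} Here the $p$-cycles $(1,2,\dots,p)$ and $(2,3,\dots,p+1)$ lie in $\mathcal{A}_n$ (a $p$-cycle is even precisely because $p$ is odd), have order $p$, and generate distinct subgroups: every element of $\langle(1,\dots,p)\rangle$ is supported on $\{1,\dots,p\}$, whereas $(2,\dots,p+1)$ moves the point $p+1 \notin \{1,\dots,p\}$. \textbf{Case $p=n$.} A prime $p=n$ forces $n$ to be prime, hence (as $n \ge 4$) odd, so $n$-cycles are even and lie in $\mathcal{A}_n$. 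I take $\sigma_1 = (1,2,\dots,n)$ and $\sigma_2 = (1,2,\dots,n-2,n,n-1)$; these generate distinct subgroups of order $n$. The support trick is unavailable since all $n$-cycles have full support, so instead I observe that any power $\sigma_1^k$ sends $1 \mapsto 1+k \pmod n$, whence $\sigma_2(1)=2$ would force $k=1$ and $\sigma_2=\sigma_1$, which is false.

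These cases are exhaustive, since any prime $p \le n$ is either $2$, an odd prime $\le n-1$, or equal to $n$ with $n$ prime; so every possibility for the prime supplied by Lemma \ref{lem:structurelemmauniqueprimeordersubgroup} yields a contradiction, proving that $\mathcal{G}_e(\mathcal{A}_n)$ has no dominating vertex other than the identity, and the coincidence $\mathcal{G}_e^{*}(\mathcal{A}_n) = \mathcal{G}_e^{**}(\mathcal{A}_n)$ is then immediate from Definition \ref{defn: proper enhacd pwr graph}. I expect the main obstacle to be the boundary case $p=n$: the uniform support-shift argument that disposes of all smaller odd primes breaks down because every $n$-cycle has full support, so one must separately rule out that the second $n$-cycle is a power of the first. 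As an alternative to the explicit computation there, one can instead mimic the $\mathfrak{S}_n$ argument directly, noting that a dominating $a$ with $n \mid \text{o}(a)$ must be an $n$-cycle generating a maximal cyclic subgroup, and then derive a contradiction from the adjacency of $a$ to the order-$2$ element $(1,2)(3,4)$.
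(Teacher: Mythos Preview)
Your argument is correct and follows the same overall strategy as the paper: invoke Lemma~\ref{lem:structurelemmauniqueprimeordersubgroup} and then, for each prime $p\le n$, exhibit two distinct order-$p$ subgroups of $\mathcal{A}_n$. The only real difference is in how the cases are carved up. You use a support-shift $(1,\dots,p)$ versus $(2,\dots,p{+}1)$ for odd $p\le n-1$, which forces you to treat $p=n$ separately and check by hand that your second $n$-cycle is not a power of the first. The paper instead chooses, for every prime $p\ge 5$, two $p$-cycles on the \emph{same} support, namely $(1,2,3,\dots,p)$ and $(1,3,2,4,\dots,p)$; since these already live on $\{1,\dots,p\}$ the argument works uniformly whether $p<n$ or $p=n$, and only $p=2$ and $p=3$ need separate (easy) witnesses. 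Your route is perfectly valid, but the paper's choice of witnesses dissolves precisely the boundary case you identified as the main obstacle.
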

\begin{proof}
Let, $a\neq e$ be a dominating vertex.  By Lemma \ref{lem:structurelemmauniqueprimeordersubgroup}, there exists a prime $p$ such that $G$ has a unique subgroup of order $p.$ If $p \geq 4$, take  $C_1= \langle (1, 2, 3, \cdots, p) \rangle$ and   $C_2= \langle (1, 3, 2, \cdots, p) \rangle.$ If  $p=3,$ take $C_1= \langle (1, 2, 3) \rangle$ and   $C_2= \langle (2, 3, 4) \rangle.$ When  $p=2,$ let $C_1= \langle (1, 2)(3, 4) \rangle$ and   $C_2= \langle (1, 3)(2, 4) \rangle.$ Thus, in each case, we can verify that both $C_1$ and $C_2$ are even permutations and this contradicts Lemma \ref{lem:structurelemmauniqueprimeordersubgroup}. This completes the proof. 
\end{proof}
From Theorem \ref{thm:doostabadi_power_graph_alternating} and Lemma \ref{lem:alternatinggrouphasnodominatingvertex}, we prove the following theorem. 
\begin{theorem}
\label{thm:vertex-connectivity_of_alt_groups}
For positive integers $n \geq 7$, 
$\kappa (\mathcal{G}_e(\mathcal{A}_n)) = 1$ if and 
only if one of $n, n-1, n-2, n/2, (n-1)/2, (n-2)/2$ 
is prime. 
\end{theorem}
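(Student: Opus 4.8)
The plan is to reduce the statement to the disconnectedness of the deleted enhanced power graph and then invoke the three results already established. First I would record the structural fact that drives everything: the identity $e$ is a dominating vertex of $\mathcal{G}_e(\mathcal{A}_n)$, so $\mathcal{G}_e(\mathcal{A}_n)$ is connected and $\kappa(\mathcal{G}_e(\mathcal{A}_n)) \geq 1$. The central reduction I would establish is that $\kappa(\mathcal{G}_e(\mathcal{A}_n)) = 1$ if and only if $\mathcal{G}_e^*(\mathcal{A}_n)$ is disconnected. One direction is immediate: if $\mathcal{G}_e^*(\mathcal{A}_n)$ is disconnected, then deleting the single vertex $e$ disconnects $\mathcal{G}_e(\mathcal{A}_n)$, giving $\kappa = 1$. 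For the converse, suppose a single vertex $v$ is a cut vertex. If $v \neq e$, then after deleting $v$ the identity $e$ survives and remains adjacent to every other vertex, so the remaining graph is still connected, a contradiction. Hence the only possible cut vertex is $e$, which forces $\mathcal{G}_e^*(\mathcal{A}_n)$ to be disconnected. I also note that $\mathcal{G}_e(\mathcal{A}_n)$ is not complete for $n \geq 4$ by Lemma \ref{enhd coplte iff cyclic}, since $\mathcal{A}_n$ is non-cyclic, so $\kappa$ is genuinely determined by a vertex cut rather than by completeness.

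With this reduction in hand, I would assemble the chain of equivalences. By Lemma \ref{lem:alternatinggrouphasnodominatingvertex}, for $n \geq 4$ the group $\mathcal{A}_n$ has no dominating vertex other than $e$, so $\mathcal{G}_e^*(\mathcal{A}_n) = \mathcal{G}_e^{**}(\mathcal{A}_n)$; in particular, removing $e$ is the only way that deletion of a single vertex can ever disconnect the graph. By Theorem \ref{thm:power_connected_iff_enhcdpower_connected}, $\mathcal{G}_e^*(\mathcal{A}_n)$ is connected if and only if $\mathcal{P}^*(\mathcal{A}_n)$ is connected. Finally, Theorem \ref{thm:doostabadi_power_graph_alternating} tells us precisely when $\mathcal{P}^*(\mathcal{A}_n)$ is disconnected, namely exactly when one of $n, n-1, n-2, n/2, (n-1)/2, (n-2)/2$ is prime.

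Stringing these together for $n \geq 7$ yields that $\kappa(\mathcal{G}_e(\mathcal{A}_n)) = 1$ iff $\mathcal{G}_e^*(\mathcal{A}_n)$ is disconnected iff $\mathcal{P}^*(\mathcal{A}_n)$ is disconnected iff one of the six listed quantities is prime, which is exactly the claim. I do not expect a serious obstacle here, since the substantive work has already been carried out in Lemma \ref{lem:alternatinggrouphasnodominatingvertex} and in the cited connectivity theorems; the only point requiring genuine care is the reduction step, where one must argue that no non-identity vertex can be a cut vertex because $e$ stays universal after its removal. The hypothesis $n \geq 7$ aligns with the range in which Theorem \ref{thm:doostabadi_power_graph_alternating} is invoked, so that the primality dichotomy applies cleanly without small-case exceptions. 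This mirrors exactly the argument used for $\mathfrak{S}_n$ in Theorem \ref{thm:vertex-connectivity_of_sym_groups}.
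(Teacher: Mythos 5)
Your proposal is correct and follows essentially the same route as the paper: cite Lemma \ref{lem:alternatinggrouphasnodominatingvertex} to identify $\mathcal{G}_e^{*}(\mathcal{A}_n)$ with $\mathcal{G}_e^{**}(\mathcal{A}_n)$, then combine Theorem \ref{thm:power_connected_iff_enhcdpower_connected} with Theorem \ref{thm:doostabadi_power_graph_alternating}. The only difference is that you spell out the reduction ``$\kappa(\mathcal{G}_e(\mathcal{A}_n))=1$ if and only if $\mathcal{G}_e^{*}(\mathcal{A}_n)$ is disconnected'' (no non-identity vertex can be a cut vertex since $e$ stays universal), a step the paper leaves implicit, so your write-up is if anything more complete.
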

\begin{proof}
By Lemma \ref{lem:alternatinggrouphasnodominatingvertex}, 
we have 
$\mathcal{G}_e^{*}(\mathfrak{S}_n)=\mathcal{G}_e^{**}(\mathfrak{S}_n).$ Now, we are done using Theorem \ref{thm:power_connected_iff_enhcdpower_connected} and Theorem \ref{thm:doostabadi_power_graph_alternating}.
\end{proof}
\subsection*{Acknowledgement}
The First author would like to thank Prof. Arvind Ayyer for his constant support and encouragement. The second author would like to thank Prof. Sivaramakrishnan Sivasubramanian for his constant support and encouragement. The third author would like to thank Prof. Basudeb Datta for his constant support and encouragement. The first author was supported by Department of Science and Technology grant EMR/2016/006624 and partly supported by  UGC Centre for Advanced Studies. Also the first author was supported by NBHM Post Doctoral Fellowship grant 0204/52/2019/RD-II/339. The second author was supported by a CSIR-SPM fellowship. The third author was supported by NBHM Post Doctoral Fellowship grant 0204/3/2020/RD-II/2470. 
\bibliographystyle{amsplain}
\bibliography{gen-inv-lcp.bib}

\providecommand{\bysame}{\leavevmode\hbox to3em{\hrulefill}\thinspace}
\providecommand{\MR}{\relax\ifhmode\unskip\space\fi MR }
% \MRhref is called by the amsart/book/proc definition of \MR.
\providecommand{\MRhref}[2]{%
  \href{http://www.ams.org/mathscinet-getitem?mr=#1}{#2}
}
\providecommand{\href}[2]{#2}
\begin{thebibliography}{10}

\bibitem{firstenhcedpwrstrctreaacns1}
G.~Aalipour, S.~Akbari, P.~J. Cameron, R.~Nikandish, and F.~Shaveisi, \emph{On
  the structure of the power graph and the enhanced power graph of a group},
  The Electronic J. Combinatorics \textbf{24} (2017), no.~3, P3.16.

\bibitem{surveypwrgraphkac1}
J.~Abawajy, A.~V. Kelarev, and M.~Chowdhury, \emph{Power graphs: a survey},
  Electron. J. Graph Theory Appl \textbf{1} (2013), 125--147.

\bibitem{anderson}
D.~F. Anderson and P.~S. Livingston, \emph{The zero-divisor graph of a
  commutative ring}, J. Algebra \textbf{217} (1999), 434--447.

\bibitem{ijraeljofmathematics}
J.~Ara\'{u}jo, W.~Bentz, and J.~Konieczny, \emph{The commuting graph of the
  symmetric inverse semigroup}, Israel J. Mathematics \textbf{207} (2015),
  no.~1, 103--149.

\bibitem{atani1}
S.~E. Atani, \emph{An ideal based zero divisor graph of a commutative
  semiring}, Glasnik Matematicki \textbf{44} (2009), no.~64, 141--153.

\bibitem{intersectionpwegraphb3}
S.~Bera, \emph{On the intersection power graph of a finite group}, Electron. J.
  Graph Theory Appl. \textbf{6} (2018), no.~1, 178--189.

\bibitem{enhancedpwrgrapbb3}
S.~Bera and A.~K. Bhuniya, \emph{On enhanced power graphs of finite groups}, J.
  Algebra Appl. \textbf{17} (2018), no.~8, 1850146, 8.

\bibitem{normalsubgrpbasedpwrbb2}
A.~K. Bhuniya and S.~Bera, \emph{Normal subgroup based power graphs of a finite
  group}, Communications in Algebra \textbf{45} (2016), 3251--3259.

\bibitem{graphthrybondymurti}
J.~A. Bondy and U.~S.~R. Murty, \emph{Graph theory}, Springer-Verlag, 2008.

\bibitem{braurflower}
R.~Brauer and K.A. Fowler, \emph{On groups of even order}, The Annals of
  Mathematics \textbf{62} (1955), no.~3, 567--583.

\bibitem{HeidarJafari}
P.~J. Cameron and S.~H. Jafari, \emph{On the connectivity and independence
  number of power graphs of groups}, Graphs and Combinatorics \textbf{36}
  (2020), 895--904.

\bibitem{undpwrgraphofsemgmainsgc1}
I.~Chakrabarty, S.~Ghosh, and M.~K. Sen, \emph{Undirected power graphs of
  semigroups}, Semigroup Forum \textbf{78} (2009), 410--426.

\bibitem{generalized-quaternion}
K.~Conrad, \emph{Generalized quaternions},
  http://www.math.uconn.edu/~kconrad/blurbs/, 2014.

\bibitem{deMeyer}
F.~DeMeyer and L.~DeMeyer, \emph{Zero divisor graph of a semigroup}, J. Algebra
  \textbf{283} (2005), 190--198.

\bibitem{doostabadi}
A.~Doostabadi and Farrokhi.~M. D., \emph{On the connectivity of proper power
  graphs of finite group}, Communications in Algebra, \textbf{43} (2015).

\bibitem{algebradummitfoote}
D.~S. Dummit and R.~M. Foote, \emph{Abstract algebra}, Wiley, 2003.

\bibitem{onboundingdiamcommuting}
M.~Giudici and A.~Pope, \emph{On bounding the diameter of the commuting graph
  of a group}, J. Group Theory \textbf{17} (2014), no.~1, 131--149.

\bibitem{algbraphgodsil}
C.~Godsil and G.~Royle, \emph{Algebraic graph theory}, Springer-Verlag Inc.,
  New York, 2001.

\bibitem{Hamzeh-ashrafi}
A.~Hamzeh and A.~R. Ashrafi, \emph{Automorphism groups of supergraphs of the
  power graph of a finite group}, European. J. Combin. \textbf{60} (2017),
  82--88.

\bibitem{joshizerodivgraphofideal}
V.~Joshi, \emph{Zero divisor graph of a poset with respect to an ideal}, Order
  \textbf{29} (2012), 499--506.

\bibitem{cayleygraphsckry}
A.~Kelarev, J.~Ryan, and J.~Yearwood, \emph{Cayley graphs as classifiers for
  data mining: The influence of asymmetries}, Discrete Mathematics \textbf{309}
  (2009), no.~17, 5360--5369.

\bibitem{automatatheory}
A.~V. Kelarev, \emph{Graphs algebras and automata}, Marcel Dekker, New York,
  2003.

\bibitem{combinatorialpropertyandpowergraphsofgroupskq1}
A.~V. Kelarev and S.~J. Quin, \emph{A combinatorial property and power graphs
  of groups}, Contrib. General Algebra \textbf{12} (2000), 229--235.

\bibitem{directedgrphcompropofsemgrpkq3}
\bysame, \emph{Directed graph and combinatorial properties of semigroups}, J.
  Algebra \textbf{251} (2002), 16--26.

\bibitem{ma-she}
X.~Ma and Y.~She, \emph{The metric dimension of the enhanced power graph of a
  finite group}, J. Algebra Appl. (2019), 190--198.

\bibitem{panda-dalal-kumar}
R.~P. Panda, S.~Dalal, and J.~Kumar, \emph{On the enhanced power graph of a
  finite group}, arXiv: 2001.08932v1 [math.GR] (2020).

\bibitem{scott-group}
W.~R. Scott, \emph{Group theory}, Dover Publ., New York, 1987.

\bibitem{graphthrywest}
D.~B. West, \emph{Introduction to graph theory}, Pearson Education Inc., 2001.

\bibitem{Zahirovienhnacedpwrgraph}
Samir Zahirovi\'{c}, Ivica Bo\v{s}njak, and Roz\'{a}lia Madar\'{a}sz, \emph{A
  study of enhanced power graphs of finite groups}, J. Algebra Appl.
  \textbf{19} (2020), no.~4, 2050062, 20.

\end{thebibliography}
\end{document}